\theoremstyle{plain}
\newtheorem{theorem}{Theorem}[section]
\newtheorem{proposition}[theorem]{Proposition}
\newtheorem{lemma}[theorem]{Lemma}
\newtheorem{corollary}[theorem]{Corollary}
\theoremstyle{definition}
\newtheorem{assumption}[theorem]{Assumption}
\theoremstyle{remark}
\newtheorem{remark}[theorem]{Remark}
\newcommand{\PP}[1]{\mathbb{P}_{#1}}
\newcommand{\EE}[1]{\mathbb{E}_{#1}}
\definecolor{C1}{RGB}{239,133,54}
\definecolor{C0}{RGB}{57,118,175}
\icmltitlerunning{End-to-End Learning for Stochastic Optimization: A Bayesian Perspective}
\begin{document}

\twocolumn[
\icmltitle{End-to-End Learning for Stochastic Optimization: A Bayesian Perspective}



\icmlsetsymbol{equal}{*}

\begin{icmlauthorlist}
\icmlauthor{Yves Rychener}{epfl}
\icmlauthor{Daniel Kuhn}{epfl}
\icmlauthor{Tobias Sutter}{konstanz}
\end{icmlauthorlist}

\icmlaffiliation{epfl}{Risk Analytics and Optimization Chair, \'Ecole Polytechnique F\'ed\'erale de Lausanne, Switzerland}
\icmlaffiliation{konstanz}{Department of Computer and Information Science, University of Konstanz, Germany}

\icmlcorrespondingauthor{Yves Rychener}{yves.rychener@epfl.ch}

\icmlkeywords{End-to-End Learning, Distributionally Robust Optimization, Stochastic Programming, Posterior Bayes Action}

\vskip 0.3in
]



\printAffiliationsAndNotice{}  

\begin{abstract}
We develop a principled approach to end-to-end learning in stochastic optimization. First, we show that the standard end-to-end learning algorithm admits a Bayesian interpretation and trains a posterior Bayes action map. Building on the insights of this analysis, we then propose new end-to-end learning algorithms for training decision maps that output solutions of empirical risk minimization and distributionally robust optimization problems, two dominant modeling paradigms in optimization under uncertainty. Numerical results for a synthetic newsvendor problem illustrate the key differences between alternative training schemes. We also investigate an economic dispatch problem based on real data to showcase the impact of the neural network architecture of the decision maps on their test performance.
\end{abstract}

\section{Introduction}\label{sec:intro}
Most practical decision problems can be framed as stochastic optimization models that minimize the expected value of a loss function impacted by one's decisions and by an exogenous random variable~$Y$. Stochastic optimization techniques are routinely used, for example, in portfolio selection \cite{markowitz2000mean} or economic dispatch and pricing \cite{cournot1897researches,wong2007pricing} among many other areas. However, the probability distribution~$\PP{Y}$ of the uncertain problem parameter~$Y$ is generically unknown and needs to be inferred from finitely many training samples $\widehat{Y}_1,\ldots,\widehat{Y}_N$. In {\em empirical risk minimization} (ERM), $\PP{Y}$ is simply replaced with the empirical (uniform) distribution on the given samples. Unfortunately, ERM is susceptible to overfitting, that is, it leads to decisions that exploit artefacts of the training samples but perform poorly on test data. This effect is also referred to as the ``optimizer's curse" in decision theory \cite{smith2006optimizer}. Various regularization techniques have been proposed to combat this effect. {\em Distributionally robust optimization} (DRO) \cite{delage2010distributionally,wiesemann2014distributionally}, for example, seeks decisions that are worst-case optimal in view of a large family of distributions that could have generated the training samples. Alternatively, any additional information or beliefs about $\mathbb P_{Y}$ may be used as a prior that is updated upon observing training data like in Bayesian estimation. The use of prior information also has a regularizing effect, and the resulting optimal decision is referred to as the {\em posterior Bayes action} \cite{berger2013bayesian_opt}.
ERM and DRO implicitly assume that {\em independent} samples from $\mathbb P_Y$ form the {\em only} source of information available to the decision-maker. However, this assumptions often fails to hold in practice. Financial asset returns are not stationary, and their distribution is correlated with slowly varying macroeconomic factors~\cite{li2002macroeconomic}. Similarly, the distribution of wind energy production levels depends on meteorological conditions and is strongly correlated with wind speeds.  \textit{Contextual stochastic programs} \cite{bertsimas2020predictive} exploit contextual information such as macroeconomic factors or wind speeds to inform decision-making. Existing approaches to contextual stochastic optimization can be grouped into two categories. \textit{Predict-then-optimize} approaches (see, e.g., \cite{mivsic2020data} for a survey) first use some method from statistics or machine learning to learn a parametric or non-parametric model of the distribution~$\mathbb P_Y$. In a second step, the learned distribution model is used in an optimization model to compute a decision. A key drawback of this approach is that the machine learning method used for predicting~$\mathbb P_Y$ (e.g., maximum likelihood estimation) is agnostic of the downstream optimization model. Some ideas to remedy this shortcoming are discussed in \cite{ref:Elmachtoub-22}.
\textit{End-to-end} learning approaches, on the other hand, train a decision map directly from the available data without the detour of first estimating a model for~$\mathbb P_Y$ \cite{donti2017end2end_constr}.



In this work, we investigate the usage of neural networks in end-to-end learning. The relation between a trained neural network and the corresponding Bayes optimal classifier~\cite{baum1987supervised,wan1990neural,papoulis2002probability,kline2005revisiting} as well as the universal approximation capabilities of neural networks~\cite{cybenko1989approximation, NIPS2017_32cbf687} are well understood in traditional regression and classification. In the context of end-to-end learning, however, similar results are lacking. In this work, we aim to close this gap.

\paragraph{Related Work:}
The concept of \textit{end-to-end learning} for stochastic optimization was first introduced by \citet{donti2017end2end_constr}. Instead of minimizing a generic loss function such as the mean-square error, end-to-end learning directly minimizes the task loss over a class of neural networks that embed the underlying stochastic optimization model in their architecture. Despite their unorthodox structure, such neural networks can be differentiated by exploiting the Karush-Kuhn-Tucker optimality conditions of the embedded optimization model \cite{cvxpylayers2019}. Hence, they remain amenable to gradient-based training schemes. More recent approaches to end-to-end learning relax the requirement that the neural network must contain an optimization layer and use simpler architectures to approximate the decision map \cite{ref:Zhang-20,ref:Butler-21,ref:Uysal-21}. This paper complements these efforts. Instead of introducing new algorithms to improve performance, we aim to advance our theoretical understanding of the existing algorithms and extend them to broader problem classes. 

 The term {\em end-to-end} is used slightly differently across different machine learning communities. We adopt the same convention as \citet{donti2017end2end_constr} whereby a decision model is ``end-to-end" if it is trained on the task loss. In this case the feature extractor and prescriptor are trained jointly and directly on the task loss of interest. In traditional deep learning, on the other hand, end-to-end learning usually refers to the training of a deep network without hand-crafted features that processes the data input in its original format, such as audio spectrograms \cite{amodei2016deep}, images~\cite{wang2011end,he2016deep} or text \cite{wang2017tacotron}.


End-to-end learning is also related to \textit{reinforcement learning}, which seeks a policy that maps an observable state to an action in a {\em dynamic} decision-making context. Reinforcement learning agents have been successfully trained to play board games \cite{silver2017mastering, silver2018general} as well as video games \cite{vinyals2019grandmaster} or to steer self-driving cars \cite{kiran2021deep}. However, a key distinguishing feature of reinforcement learning applications is their dynamic nature. The decision-maker interacts with an unknown environment over multiple periods, and the action chosen in a particular period affects the state of the environment in the next period. The decision-maker thus seeks an action that not only incurs a small loss in the current period but also leads to a favorable state in the next period. 
In contrast, end-to-end learning focuses on {\em static} decision problems. We highlight that end-to-end learning is also related to \textit{contextual bandits} \cite{chu2011contextual,ref:book:Csaba}, where an agent sequentially chooses among a finite set of actions, whose expected loss or payoff depends on an unknown distribution conditioned on an observable context. End-to-end learning differs from contextual bandits and reinforcement learning in that training is performed offline. Only the immediate loss after training is relevant, which eliminates the notorious exploration-exploitation trade-off \cite{audibert2009exploration,graves2014towards}.




\paragraph{Contributions:} Our contributions are summarized below. \\
    $\bullet$ We develop a general and versatile modeling framework for end-to-end learning in stochastic optimization.\\
    $\bullet$ We show that the widely used standard algorithm for end-to-end learning outputs a posterior Bayes action map.\\
    $\bullet$ Leveraging the insights of our Bayesian analysis, we propose new end-to-end learning algorithms for training decision maps that output solutions of ERM and DRO problems.\\
    $\bullet$ We show that existing universal approximation results for neural networks extend to decision maps of end-to-end learning models with projection and optimization layers.\\
    $\bullet$ We experimentally compare different approaches to end-to-end learning and different types of contextual information in the framework of a newsvendor problem with synthetic data and an economic dispatch problem with real data.

\paragraph{Notation:}
All random objects are defined on an abstract probability space $(\Omega,\mathcal{F},\PP{})$, and $\EE{}[\cdot]$ denotes the expectation with respect to $\PP{}$. Random objects are denoted by capital letters and their realizations by the corresponding lowercase letters. Given two random vectors~$X$ and~$Y$, we use $\PP{Y}$ and $\PP{Y|X}$ to denote the marginal distribution of~$Y$ and the conditional distribution of~$Y$ given~$X$, respectively.

\section{End-to-End Learning}\label{sec:endtoend}

We consider a decision problem impacted by a random vector $Y\in\mathcal{Y}$, and we assume that the decision-maker has access to an observation $X\in\mathcal{X}$ that provides information about the distribution of~$Y$. In order to express all possible causal relationships between~$Y$ and~$X$, we further assume that there is an unobservable confounder~$Z\in\mathcal{Z}$. For example, $Z$ could represent a parameter that uniquely determines the joint distribution~$\PP{(X,Y)}$ of~$X$ and~$Y$. All Bayesian network structures of interest are shown in Figure~\ref{fig:causal-model}. In Figure~\ref{fig:bayesian:standard}, $X$ and $Y$ have the same parent and no cross influence. This is the case, for instance,  if, conditional on~$Z$, $X=[\widehat Y_1,\ldots,\widehat Y_N]$ consists of multiple independent and identically distributed (i.i.d.) copies of $Y$. In Figure~\ref{fig:bayesian:measurement}, there is an additional direct causal link from~$Y$ to~$X$. This is the case, for instance, if~$X$ represents a noisy measurement of~$Y$. Finally, in Figure~\ref{fig:bayesian:markov}, there is an additional direct causal link from~$X$ to~$Y$. This is the case, for instance, if~$Y$ represents the current state and~$X$ the previous state of a Markov chain or if~$X$ captures contextual information. 

The decision maker aims to solve the stochastic program
\begin{equation}\label{eq:SP}
\min_{a\in \mathcal{A}} \EE{}[\ell(Y, a)|X].
\end{equation}
\begin{figure}
    \centering
    \begin{subfigure}[b]{0.3\linewidth}
    \centering
    \tikzset{every picture/.style={line width=0.75pt}} 

\begin{tikzpicture}[x=0.75pt,y=0.75pt,yscale=-0.6,xscale=0.6]

\draw   (113,72) .. controls (113,63.72) and (126.43,57) .. (143,57) .. controls (159.57,57) and (173,63.72) .. (173,72) .. controls (173,80.28) and (159.57,87) .. (143,87) .. controls (126.43,87) and (113,80.28) .. (113,72) -- cycle ;
\draw   (154,121) .. controls (154,112.72) and (167.43,106) .. (184,106) .. controls (200.57,106) and (214,112.72) .. (214,121) .. controls (214,129.28) and (200.57,136) .. (184,136) .. controls (167.43,136) and (154,129.28) .. (154,121) -- cycle ;
\draw   (56,121) .. controls (56,112.72) and (69.43,106) .. (86,106) .. controls (102.57,106) and (116,112.72) .. (116,121) .. controls (116,129.28) and (102.57,136) .. (86,136) .. controls (69.43,136) and (56,129.28) .. (56,121) -- cycle ;
\draw   (43,173) .. controls (43,164.72) and (63.15,158) .. (88,158) .. controls (112.85,158) and (133,164.72) .. (133,173) .. controls (133,181.28) and (112.85,188) .. (88,188) .. controls (63.15,188) and (43,181.28) .. (43,173) -- cycle ;
\draw   (99,212.5) .. controls (99,202.28) and (119.15,194) .. (144,194) .. controls (168.85,194) and (189,202.28) .. (189,212.5) .. controls (189,222.72) and (168.85,231) .. (144,231) .. controls (119.15,231) and (99,222.72) .. (99,212.5) -- cycle ;
\draw    (117,80) -- (91.47,103.64) ;
\draw [shift={(90,105)}, rotate = 317.2] [color={rgb, 255:red, 0; green, 0; blue, 0 }  ][line width=0.75]    (10.93,-3.29) .. controls (6.95,-1.4) and (3.31,-0.3) .. (0,0) .. controls (3.31,0.3) and (6.95,1.4) .. (10.93,3.29)   ;
\draw    (170,80) -- (188.75,103.44) ;
\draw [shift={(190,105)}, rotate = 231.34] [color={rgb, 255:red, 0; green, 0; blue, 0 }  ][line width=0.75]    (10.93,-3.29) .. controls (6.95,-1.4) and (3.31,-0.3) .. (0,0) .. controls (3.31,0.3) and (6.95,1.4) .. (10.93,3.29)   ;
\draw    (180,135) -- (160.64,192.11) ;
\draw [shift={(160,194)}, rotate = 288.73] [color={rgb, 255:red, 0; green, 0; blue, 0 }  ][line width=0.75]    (10.93,-3.29) .. controls (6.95,-1.4) and (3.31,-0.3) .. (0,0) .. controls (3.31,0.3) and (6.95,1.4) .. (10.93,3.29)   ;
\draw    (86,138) -- (86,156) ;
\draw [shift={(86,158)}, rotate = 270] [color={rgb, 255:red, 0; green, 0; blue, 0 }  ][line width=0.75]    (10.93,-3.29) .. controls (6.95,-1.4) and (3.31,-0.3) .. (0,0) .. controls (3.31,0.3) and (6.95,1.4) .. (10.93,3.29)   ;
\draw    (87,188) -- (103.42,200.77) ;
\draw [shift={(105,202)}, rotate = 217.87] [color={rgb, 255:red, 0; green, 0; blue, 0 }  ][line width=0.75]    (10.93,-3.29) .. controls (6.95,-1.4) and (3.31,-0.3) .. (0,0) .. controls (3.31,0.3) and (6.95,1.4) .. (10.93,3.29)   ;

\draw (143,72) node [anchor=center][inner sep=0.75pt]    {\tiny$Z \in \mathcal{Z }$};
\draw (86,121) node [anchor=center][inner sep=0.75pt]    {\tiny$X\in \mathcal{X}$};
\draw (184, 121) node [anchor=center][inner sep=0.75pt]    {\tiny$Y\in \mathcal{Y}$};
\draw (88, 173) node [anchor=center][inner sep=0.75pt]    {\tiny$A=m( X)$};
\draw (144,212.5) node [anchor=center][inner sep=0.75pt]    {\tiny$L=\ell (Y,A)$};

\end{tikzpicture}
    \caption{Common Parent}\label{fig:bayesian:standard}
    \end{subfigure}
    \hfill
    \begin{subfigure}[b]{0.3\linewidth}
    \centering
    \tikzset{every picture/.style={line width=0.75pt}} 

\begin{tikzpicture}[x=0.75pt,y=0.75pt,yscale=-0.6,xscale=0.6]

\draw   (113,72) .. controls (113,63.72) and (126.43,57) .. (143,57) .. controls (159.57,57) and (173,63.72) .. (173,72) .. controls (173,80.28) and (159.57,87) .. (143,87) .. controls (126.43,87) and (113,80.28) .. (113,72) -- cycle ;
\draw   (154,121) .. controls (154,112.72) and (167.43,106) .. (184,106) .. controls (200.57,106) and (214,112.72) .. (214,121) .. controls (214,129.28) and (200.57,136) .. (184,136) .. controls (167.43,136) and (154,129.28) .. (154,121) -- cycle ;
\draw   (56,121) .. controls (56,112.72) and (69.43,106) .. (86,106) .. controls (102.57,106) and (116,112.72) .. (116,121) .. controls (116,129.28) and (102.57,136) .. (86,136) .. controls (69.43,136) and (56,129.28) .. (56,121) -- cycle ;
\draw   (43,173) .. controls (43,164.72) and (63.15,158) .. (88,158) .. controls (112.85,158) and (133,164.72) .. (133,173) .. controls (133,181.28) and (112.85,188) .. (88,188) .. controls (63.15,188) and (43,181.28) .. (43,173) -- cycle ;
\draw   (99,212.5) .. controls (99,202.28) and (119.15,194) .. (144,194) .. controls (168.85,194) and (189,202.28) .. (189,212.5) .. controls (189,222.72) and (168.85,231) .. (144,231) .. controls (119.15,231) and (99,222.72) .. (99,212.5) -- cycle ;
\draw    (117,80) -- (91.47,103.64) ;
\draw [shift={(90,105)}, rotate = 317.2] [color={rgb, 255:red, 0; green, 0; blue, 0 }  ][line width=0.75]    (10.93,-3.29) .. controls (6.95,-1.4) and (3.31,-0.3) .. (0,0) .. controls (3.31,0.3) and (6.95,1.4) .. (10.93,3.29)   ;
\draw    (170,80) -- (188.75,103.44) ;
\draw [shift={(190,105)}, rotate = 231.34] [color={rgb, 255:red, 0; green, 0; blue, 0 }  ][line width=0.75]    (10.93,-3.29) .. controls (6.95,-1.4) and (3.31,-0.3) .. (0,0) .. controls (3.31,0.3) and (6.95,1.4) .. (10.93,3.29)   ;
\draw    (180,135) -- (160.64,192.11) ;
\draw [shift={(160,194)}, rotate = 288.73] [color={rgb, 255:red, 0; green, 0; blue, 0 }  ][line width=0.75]    (10.93,-3.29) .. controls (6.95,-1.4) and (3.31,-0.3) .. (0,0) .. controls (3.31,0.3) and (6.95,1.4) .. (10.93,3.29)   ;
\draw    (86,138) -- (86,156) ;
\draw [shift={(86,158)}, rotate = 270] [color={rgb, 255:red, 0; green, 0; blue, 0 }  ][line width=0.75]    (10.93,-3.29) .. controls (6.95,-1.4) and (3.31,-0.3) .. (0,0) .. controls (3.31,0.3) and (6.95,1.4) .. (10.93,3.29)   ;
\draw    (87,188) -- (103.42,200.77) ;
\draw [shift={(105,202)}, rotate = 217.87] [color={rgb, 255:red, 0; green, 0; blue, 0 }  ][line width=0.75]    (10.93,-3.29) .. controls (6.95,-1.4) and (3.31,-0.3) .. (0,0) .. controls (3.31,0.3) and (6.95,1.4) .. (10.93,3.29)   ;

\draw    (117,120) -- (154.5,120) ;
\draw [shift={(117,120)}, rotate = 0] [color={rgb, 255:red, 0; green, 0; blue, 0 }  ][line width=0.75]    (10.93,-3.29) .. controls (6.95,-1.4) and (3.31,-0.3) .. (0,0) .. controls (3.31,0.3) and (6.95,1.4) .. (10.93,3.29)   ;

\draw (143,72) node [anchor=center][inner sep=0.75pt]    {\tiny$Z \in \mathcal{Z} $};
\draw (86,121) node [anchor=center][inner sep=0.75pt]    {\tiny$X\in \mathcal{X}$};
\draw (184, 121) node [anchor=center][inner sep=0.75pt]    {\tiny$Y\in \mathcal{Y}$};
\draw (88, 173) node [anchor=center][inner sep=0.75pt]    {\tiny$A=m( X)$};
\draw (144,212.5) node [anchor=center][inner sep=0.75pt]    {\tiny$L=\ell (Y,A)$};

\end{tikzpicture}
    \caption{Dependent $X$}\label{fig:bayesian:measurement}
    \end{subfigure}
    \hfill
    \begin{subfigure}[b]{0.3\linewidth}
    \centering
    \tikzset{every picture/.style={line width=0.75pt}} 

\begin{tikzpicture}[x=0.75pt,y=0.75pt,yscale=-0.6,xscale=0.6]

\draw   (113,72) .. controls (113,63.72) and (126.43,57) .. (143,57) .. controls (159.57,57) and (173,63.72) .. (173,72) .. controls (173,80.28) and (159.57,87) .. (143,87) .. controls (126.43,87) and (113,80.28) .. (113,72) -- cycle ;
\draw   (154,121) .. controls (154,112.72) and (167.43,106) .. (184,106) .. controls (200.57,106) and (214,112.72) .. (214,121) .. controls (214,129.28) and (200.57,136) .. (184,136) .. controls (167.43,136) and (154,129.28) .. (154,121) -- cycle ;
\draw   (56,121) .. controls (56,112.72) and (69.43,106) .. (86,106) .. controls (102.57,106) and (116,112.72) .. (116,121) .. controls (116,129.28) and (102.57,136) .. (86,136) .. controls (69.43,136) and (56,129.28) .. (56,121) -- cycle ;
\draw   (43,173) .. controls (43,164.72) and (63.15,158) .. (88,158) .. controls (112.85,158) and (133,164.72) .. (133,173) .. controls (133,181.28) and (112.85,188) .. (88,188) .. controls (63.15,188) and (43,181.28) .. (43,173) -- cycle ;
\draw   (99,212.5) .. controls (99,202.28) and (119.15,194) .. (144,194) .. controls (168.85,194) and (189,202.28) .. (189,212.5) .. controls (189,222.72) and (168.85,231) .. (144,231) .. controls (119.15,231) and (99,222.72) .. (99,212.5) -- cycle ;
\draw    (117,80) -- (91.47,103.64) ;
\draw [shift={(90,105)}, rotate = 317.2] [color={rgb, 255:red, 0; green, 0; blue, 0 }  ][line width=0.75]    (10.93,-3.29) .. controls (6.95,-1.4) and (3.31,-0.3) .. (0,0) .. controls (3.31,0.3) and (6.95,1.4) .. (10.93,3.29)   ;
\draw    (170,80) -- (188.75,103.44) ;
\draw [shift={(190,105)}, rotate = 231.34] [color={rgb, 255:red, 0; green, 0; blue, 0 }  ][line width=0.75]    (10.93,-3.29) .. controls (6.95,-1.4) and (3.31,-0.3) .. (0,0) .. controls (3.31,0.3) and (6.95,1.4) .. (10.93,3.29)   ;
\draw    (180,135) -- (160.64,192.11) ;
\draw [shift={(160,194)}, rotate = 288.73] [color={rgb, 255:red, 0; green, 0; blue, 0 }  ][line width=0.75]    (10.93,-3.29) .. controls (6.95,-1.4) and (3.31,-0.3) .. (0,0) .. controls (3.31,0.3) and (6.95,1.4) .. (10.93,3.29)   ;
\draw    (86,138) -- (86,156) ;
\draw [shift={(86,158)}, rotate = 270] [color={rgb, 255:red, 0; green, 0; blue, 0 }  ][line width=0.75]    (10.93,-3.29) .. controls (6.95,-1.4) and (3.31,-0.3) .. (0,0) .. controls (3.31,0.3) and (6.95,1.4) .. (10.93,3.29)   ;
\draw    (87,188) -- (103.42,200.77) ;
\draw [shift={(105,202)}, rotate = 217.87] [color={rgb, 255:red, 0; green, 0; blue, 0 }  ][line width=0.75]    (10.93,-3.29) .. controls (6.95,-1.4) and (3.31,-0.3) .. (0,0) .. controls (3.31,0.3) and (6.95,1.4) .. (10.93,3.29)   ;

\draw    (116,120) -- (152,120) ;
\draw [shift={(153,120)}, rotate = 180] [color={rgb, 255:red, 0; green, 0; blue, 0 }  ][line width=0.75]    (10.93,-3.29) .. controls (6.95,-1.4) and (3.31,-0.3) .. (0,0) .. controls (3.31,0.3) and (6.95,1.4) .. (10.93,3.29)   ;

\draw (143,72) node [anchor=center][inner sep=0.75pt]    {\tiny$Z \in \mathcal{Z} $};
\draw (86,121) node [anchor=center][inner sep=0.75pt]    {\tiny$X\in \mathcal{X}$};
\draw (184, 121) node [anchor=center][inner sep=0.75pt]    {\tiny$Y\in \mathcal{Y}$};
\draw (88, 173) node [anchor=center][inner sep=0.75pt]    {\tiny$A=m( X)$};
\draw (144,212.5) node [anchor=center][inner sep=0.75pt]    {\tiny$L=\ell (Y,A)$};

\end{tikzpicture}
    \caption{Influential $X$}\label{fig:bayesian:markov}
    \end{subfigure}
    \hfill
    \caption{Bayesian networks visualizing the possible relationships between $Y$ and $X$. The decision map is denoted by $m$, the loss function by $\ell$. Both the decision $A$ and loss $L$ are random variables because they are deterministic mappings of random variables.}
    \label{fig:causal-model}
\end{figure}
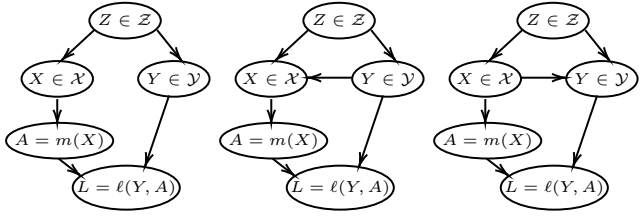
This problem minimizes the expected value of a differentiable, bounded loss function $\ell:\mathcal{Y}\times \mathcal{A}\to\mathbb{R}$, which depends on the uncertain problem parameter~$Y$ and the decision~$a\in\mathcal A$, conditional on the observation~$X$. The objective function of~\eqref{eq:SP} is commonly referred to as the Bayesian posterior loss \citep[Definition~8]{berger2013bayesian_opt}. In an energy dispatch problem, for example, $Y\in\mathcal Y=\mathbb R$ denotes the uncertain wind energy production, $a\in\mathcal{A}=\times_{j=1}^J[0, \Bar{a}_j]$ represents the energy outputs of~$J$ conventional generators with respective capacities $\Bar{a}_j$, $j=1,\ldots,J$, and 
\[
    \textstyle \ell(Y,a)=c^\top a + p\cdot\max\left\{d-Y-\sum_{j=1}^J a_j,0\right\}
\]
captures the production cost $c^\top a$ of the generators and a penalty for unmet demand. Here, $d$ stands for the total demand, $Y+\sum_{j=1}^J a_j$ represents the total energy production, and $p>0$ is a prescribed penalty parameter. In this example, the observation~$X$ can have different meanings.\\
\textbf{Figure~\ref{fig:bayesian:standard}:} Assume that the confounder~$Z\sim \mathcal N(\mu_Z,\sigma_Z^2)$ represents the (unknown) mean of $Y\sim\mathcal N(Z,\sigma^2_Y)$. In this case $X=[\widehat Y_1,\ldots,\widehat Y_N]$ may represent a collection of~$N$ historical wind production levels. If $\widehat Y_n\sim\mathcal N(Z,\sigma_Y^2)$, $n=1,\ldots,N$, are i.i.d.\ samples from~$\mathbb P_{Y|Z}$, then one can show that the posterior distribution of~$Y$ given~$X$ is 
\[
    \textstyle \mathcal{N}\left( \left(\frac{\mu_Z}{\sigma_Z^2}+\!\frac{\sum_{n=1}^N \hat Y_n}{\sigma_Y^2}\right)\!/(\frac{1}{\sigma_Z^2}+\frac{N}{\sigma_Y^2}),\sigma_Y^2+(\frac{1}{\sigma_Z^2}+\frac{N}{\sigma_Y^2})^{-1}\right).
\]
\textbf{Figure~\ref{fig:bayesian:measurement}:} The wind power~$Y\sim\mathcal N(\mu_Y,\sigma^2_Y)$ may be indirectly observable through the output $X\sim \mathcal N(Y,\sigma^2_X)$ of a noisy power meter. Hence, the posterior distribution of~$Y$ given~$X$ is $\mathcal N(\mu_Y+\sigma^2_Y(\sigma^2_Y+\sigma^2_X)^{-1}(X-\mu_Y), \sigma_Y^2-\sigma_Y^4(\sigma_Y^2+\sigma_X^2)^{-1})$, and $Y$ becomes observable as~$\sigma^2_X$ drops.\\
\textbf{Figure~\ref{fig:bayesian:markov}:} The observation~$X$ could represent the wind speed at a nearby location, which has a causal impact on~$Y$. Exploiting such contextual information leads to better decisions. Assuming normality, the posterior distribution of~$Y$ given~$X$ adopts a similar form as above. Details are omitted.

By the interchangeability principle~\citep[Theorem~14.60]{rockafellar2009variational}, problem~\eqref{eq:SP} is equivalent to
\begin{equation}
\label{eq:SP-static}
    \min_{m\in \mathcal{M}} \EE{}[\ell(Y, m(X)],
\end{equation}
where $\mathcal M$ denotes the space of all measurable decision maps~$m$ from~$\mathcal{X}$ to~$\mathcal{A}$. Specifically, $m^\star$ solves~\eqref{eq:SP-static} if and only if $a^\star=m^\star(X)$ solves~\eqref{eq:SP} for every realization of~$X$. 

We assume from now on that the joint distribution of~$X$, $Y$ and~$Z$ is unknown. In the special situation displayed in Figure~\ref{fig:bayesian:standard}, a decision map feasible in~\eqref{eq:SP-static} can be computed from the observation~$X$ alone if $X=[\widehat Y_1,\ldots,\widehat Y_N]$ consists of sufficiently many i.i.d.\ samples from~$\mathbb P_{Y|Z}$. Indeed, $m(X)$ can be defined as a solution of the ERM problem
\begin{equation}
    \label{eq:erm}
    \min_{a\in \mathcal{A}}\textstyle \frac{1}{N}\sum_{i=1}^N\ell(\widehat Y_i,a).    
\end{equation}
Alternatively, $m(X)$ can be defined as a solution of the DRO problem \cite{delage2010distributionally,wiesemann2014distributionally} 
\begin{equation}
    \label{eq:dro}
    \min_{a\in\mathcal{A}}\max_{\mathbb Q\in\mathcal{U}(X)}\textstyle  \int_{\mathcal Y}\ell(y,a) \, {\rm d}\mathbb Q(y),
\end{equation}
where $\mathcal{U}(X)$ represents an ambiguity set, that is, a family of distributions~$\mathbb Q$ of~$Y$ that are sufficiently likely to have generated the samples~$\widehat Y_1,\ldots,\widehat Y_N$. Popular choices of the ambiguity set $\mathcal{U}(X)$ are surveyed in~\cite{ref:Rahimian-19}. However, both ERM and DRO do not readily extend to the situations depicted in Figures~\ref{fig:bayesian:measurement} and~\ref{fig:bayesian:markov}.

In the remainder of the paper we assume that we have access to i.i.d.\ training samples $\{(X_k,Y_k)\}_{k=1}^K$ (recall that the confounder~$Z$ is {\em not} observable). A feasible decision map can then be obtained via the \textit{predict-then-optimize} approach, which trains a regression model to predict~$Y$ from~$X$ and defines $m(X)$ as an action that minimizes the loss of the prediction, see, e.g., \cite{mivsic2020data}. The resulting decisions are tailored to the point prediction at hand but may incur high losses when $Y$ deviates from its prediction. In addition, the regression model used for the prediction is usually agnostic of the downstream optimization model~\eqref{eq:SP}; a notable exception being \cite{ref:Elmachtoub-22}. Instead of predicting~$Y$ from~$X$, one can use machine learning methods to predict the conditional distribution~$\mathbb P_{Y|X}$ of~$Y$ given~$X$ and define~$m(X)$ as a solution of~\eqref{eq:SP} under the estimated distribution \cite{bertsimas2020predictive}. However, the methods that are used for estimating~$\mathbb P_{Y|X}$ are again agnostic of the downstream optimization model~\eqref{eq:SP}. 

All approaches reviewed so far have the shortcoming that evaluating~$m(X)$ necessitates the solution of a potentially large optimization problem. In contrast, \textit{end-to-end learning} \cite{donti2017end2end_constr,fu2018end, cvxpylayers2019, uysal2021end, zhang2021universal} trains a parametric decision map~$m$ that is near-optimal in~\eqref{eq:SP-static} {\em without} the detour of first estimating~$Y$ or its distribution. This is achieved by applying stochastic gradient descent (SGD) directly to~\eqref{eq:SP-static}. Thus, end-to-end learning {\em (i)} avoids the artificial separation of estimation and optimization characteristic for competing methods, {\em (ii)} enjoys high scalability because the decision map is trained using SGD and can be evaluated efficiently without solving any optimization model, and {\em (iii)} can even handle unstructured observations~$X$ such as text or images.

In the following, we describe several key aspects of an end-to-end learning model.
In Section~\ref{sec:model-architecture} we discuss neural network architectures that lend themselves to representing decision maps. In Section~\ref{sec:training-process} we then review a popular SGD-based training method and prove that the resulting decision map approximately minimizes the Bayesian posterior loss $\EE{}[\ell(Y,a)|X]$ under the prior $\mathbb P_{(X,Y)}$ and the observation $X$. 

\begin{remark}[Generalized Data Sets]
\label{rem:general-training-sets}
All results of this paper extend to training sets of the form $\{(X_k,\widehat{\PP{}}_k{})\}_{k=1}^K$, where $\widehat{\PP{}}_k{}$ represents an unbiased estimator for the conditional distribution $\PP{Y|X_k}$ in the sense that
\[
    \textstyle \EE{}[\int_{\mathcal Y}\ell(y,a)\,{\rm d}\widehat{\mathbb P}_k(y)|X_k]=\EE{}[\ell(Y,a)|X_k]~ \forall k=1,\ldots,K.
\]
Note that if $(X_k,Y_k)$ is sampled from~$\mathbb P_{(X,Y)}$, for example, then the Dirac distribution $\widehat{\mathbb P}_k=\delta_{Y_k}$ constitutes an unbiased estimator for~$\PP{Y|X_k}$. Hence, the dataset $\{(X_k,\widehat{\PP{}}_k{})\}_{k=1}^K$ strictly generalizes the standard dataset $\{(X_k,Y_k)\}_{k=1}^K$.
\end{remark}

\section{Model Architecture}\label{sec:model-architecture}
A fundamental design choice for end-to-end learning models is the architecture of the decision map $m:\mathcal{X}\rightarrow \mathcal{A}$. Throughout this section, we represent~$m$ as a neural network obtained by combining a feature extractor $f:\mathcal{X}\to \mathcal{R}$ with a prescriptor $p:\mathcal{R}\to \mathcal{A}$, where $\mathcal{R}$ denotes the feature space. The complete network is thus given by $m=p \circ f$. Below we review possible choices for both $f$ and $p$.

\subsection{Feature Extractor}
As in classical machine learning tasks, the choice of the architecture for the feature extractor is mainly informed by the data format and by the desired symmetry and invariance properties. Thus, the feature extractor may include linear layers~\cite{rumelhart1986learning}, convolutional layers~\cite{zhang1988shift, lecun1989lenet}, attention layers~\cite{vaswani2017attention} and recurrent layers~\cite{elman1990finding, hochreiter1997long, cho2014properties} etc., combined with activation functions and regularization layers.

\subsection{Prescriptor}
We propose three different architectures for the prescriptor.\\
\textbf{(A) Multi-Layer Perceptron (MLP).} 
 \citet{uysal2021end} use classical neural network layers for the prescriptor. In this case the decision map reduces to a MLP, which enjoys great expressive power thanks to various universal approximation theorems; see \cite{cybenko1989approximation,ref:Barron-91, NIPS2017_32cbf687,ref:Delalleau-11} and references therein. \\
\textbf{(B) Constraint-Aware Layers.} 
The stochastic program~\eqref{eq:SP} often involves constraints that ensure compliance with physical or regulatory requirements (such as maximum driving voltage constraints in robotics or short-sales constraints in portfolio selection). To ensure that the decision map satisfies all constraints, we use an output layer that maps any input to the corresponding feasible set. This can be achieved in two different ways. Sometimes, one can manually design activation functions tailored to the constraint set at hand~\cite{zhang2021universal}. For example, the smooth softmax function maps any input into the probability simplex. However, more complicated constraint sets require a more systematic treatment. If the constraint set is closed and convex, for example, then one can construct an output layer that projects any input onto the feasible set. Unfortunately, this approach suffers from a gradient projection problem outlined in Section~\ref{sec:additional-considerations}.\\
\textbf{(C) Optimization Layers.} 
In view of~\eqref{eq:SP}, it is natural to define the prescriptor as the `argmin' map of a parametric optimization model. Specifically, the prescriptor may output the solution of a {\em deterministic} optimization model that minimizes the loss at a point estimate of~$Y$. Alternatively, it may output the solution of a {\em stochastic} optimization model that minimizes the {\em expected} loss under an estimator for the conditional distribution~$\mathbb P_{Y|X}$. In both cases, the Jacobian of the prescriptor with respect to the estimator, which is an essential ingredient for SGD-type methods, can quite generally be derived from the problem's KKT conditions \cite{donti2017end2end_constr, cvxpylayers2019, uysal2021end}. A key advantage of optimization layers is their ability to capture prior structural information. They are also highly interpretable because the features can be viewed as predictions of~$Y$ or~$\mathbb P_{Y|X}$. Like constraint-aware layers, however, optimization layers suffer from a gradient projection problem that is easy to overlook; see Section~\ref{sec:additional-considerations}.

\subsection{Approximation Capabilities}
\label{ssec:approximation:capabilities}
It is well known that MLPs can uniformly approximate any continuous function even if they only have one hidden layer \cite{cybenko1989approximation, NIPS2017_32cbf687}. We will now show that the decision maps considered in this paper inherit the universal approximation capabilities from the feature extractor.
\begin{proposition}[Universal Approximation of $m$]\label{prop:universal_approximation}
Assume that $m=p\circ f$ combines a feature extractor $f:\mathcal{X}\to \mathcal{R}$ with a prescriptor $p:\mathcal{R}\to \mathcal{A}$ and that~$p$ is $L_p$-Lipschitz continuous. Then, the following hold.
\vspace{-4mm}
\begin{enumerate}
    \item[(i)] If there exists a neural network $f_w:\mathcal{X}\rightarrow\mathcal{R}$ with $\sup_{x\in\mathcal{X}}\|f(x)-f_w(x)\|\leq\varepsilon$, then $m_w=p\circ f_w$ satisfies $\sup_{x\in\mathcal{X}}\|m(x)-m_w(x)\|\leq L_p\varepsilon$.
    \vspace{-2mm}
    \item[(ii)] If there exists a neural network $f_w:\mathcal{X}\rightarrow\mathcal{R}$ with $\mathbb E[\|f(X)-f_w(X)\|^q] \leq \varepsilon$ for some $q\geq 1$, then $m_w=p\circ f_w$ satisfies $\mathbb E[\|m(X)-m_w(X)\|^q] \leq L_p^q\varepsilon$.
\end{enumerate}
\end{proposition}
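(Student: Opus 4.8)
The plan is to derive both bounds directly from the $L_p$-Lipschitz continuity of the prescriptor, applied pointwise to the approximation guarantee on the feature extractor. Since $m = p \circ f$ and $m_w = p \circ f_w$ share the same prescriptor~$p$, for every fixed $x \in \mathcal{X}$ the Lipschitz property gives
\[
\|m(x) - m_w(x)\| = \|p(f(x)) - p(f_w(x))\| \leq L_p \|f(x) - f_w(x)\|.
\]
This single inequality is the workhorse for both claims; everything else is a matter of aggregating it over~$x$ in the appropriate way.

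For part~(i), I would take the supremum over $x \in \mathcal{X}$ on both sides of the displayed inequality and invoke the uniform approximation hypothesis $\sup_{x}\|f(x) - f_w(x)\| \leq \varepsilon$, which immediately yields $\sup_{x}\|m(x) - m_w(x)\| \leq L_p \varepsilon$. For part~(ii), I would instead raise the pointwise inequality to the power~$q$ --- legitimate because $t \mapsto t^q$ is nondecreasing on $[0,\infty)$ for $q \geq 1$ --- to obtain $\|m(X) - m_w(X)\|^q \leq L_p^q \|f(X) - f_w(X)\|^q$, and then take expectations, using monotonicity and linearity of $\mathbb{E}[\cdot]$ together with the hypothesis $\mathbb{E}[\|f(X) - f_w(X)\|^q] \leq \varepsilon$ to conclude $\mathbb{E}[\|m(X) - m_w(X)\|^q] \leq L_p^q \varepsilon$.

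The result is essentially immediate once the Lipschitz bound is in place, so I do not anticipate a genuine obstacle in the argument itself. The only points warranting a sentence of care are that $m_w = p \circ f_w$ remains a legitimate decision map (it is the composition of the fixed Lipschitz prescriptor with a neural network, hence measurable and representable within the stated architecture), and that the Lipschitz assumption on~$p$ is not vacuous for the architectures of interest --- for the constraint-aware projection layer the prescriptor is nonexpansive ($L_p = 1$), and for optimization layers Lipschitz continuity holds under mild regularity conditions, matters addressed elsewhere in the paper. Thus the substance lies entirely in the hypothesis, and the proof amounts to propagating the feature-extractor error through the Lipschitz prescriptor, first in the uniform norm and then in the $L^q$ sense.
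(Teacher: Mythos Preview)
Your proposal is correct and follows essentially the same approach as the paper: apply the Lipschitz bound $\|p(f(x))-p(f_w(x))\|\leq L_p\|f(x)-f_w(x)\|$ pointwise, then take the supremum over $x$ for~(i) and raise to the $q$-th power and take expectations for~(ii). The paper's proof is slightly terser and omits the ancillary remarks on measurability and nonexpansiveness, but the substance is identical.
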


The following corollary shows that neural networks can not only be used to approximate $m$ but also its expected loss. 

\begin{corollary}[Universal Approximation of the Loss]
\label{cor:universal-approx-loss}
Assume that $m=p\circ f$ combines a feature extractor $f:\mathcal{X}\to \mathcal{R}$ with a prescriptor $p:\mathcal{R}\to \mathcal{A}$, that $f$ is continuous and that~$p$ is $L_p$-Lipschitz continuous. If $\mathcal{X}$ is bounded and the loss function $\ell(y,a)$ is $L_\ell$-Lipschitz in $a$ uniformly across all $y\in\mathcal Y$, then for every $\varepsilon>0$ there exists a neural network $f_w:\mathcal X\to\mathcal R$ with sigmoid activation that satisfies 
$$
\left|\EE{}[\ell(Y,m(X)]-\EE{}[\ell(Y,m_w(X))]\right|\leq\varepsilon.
$$
\end{corollary}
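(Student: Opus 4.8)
The plan is to reduce the statement to Proposition~\ref{prop:universal_approximation} together with a classical universal approximation theorem for sigmoidal networks, by peeling off the two Lipschitz moduli one at a time. First I would exploit that $\ell$ is $L_\ell$-Lipschitz in its second argument uniformly in $y$, which lets me pull the decision-map error out of the loss. Since $\ell$ is bounded, all expectations below are finite, and
\begin{align*}
\left|\EE{}[\ell(Y,m(X))]-\EE{}[\ell(Y,m_w(X))]\right|
&\le \EE{}\!\left[\,|\ell(Y,m(X))-\ell(Y,m_w(X))|\,\right]\\
&\le L_\ell\,\EE{}\!\left[\,\|m(X)-m_w(X)\|\,\right],
\end{align*}
where the first inequality is Jensen's inequality for $|\cdot|$ and the second is the pointwise Lipschitz bound followed by monotonicity of the expectation. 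This reduces the claim to making $\EE{}[\|m(X)-m_w(X)\|]$ small, which is precisely what Proposition~\ref{prop:universal_approximation} controls.

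Next I would invoke Proposition~\ref{prop:universal_approximation}(i): if a network $f_w$ approximates $f$ uniformly within $\varepsilon'$, then $\sup_{x\in\mathcal X}\|m(x)-m_w(x)\|\le L_p\varepsilon'$, and therefore $\EE{}[\|m(X)-m_w(X)\|]\le L_p\varepsilon'$ as well. Chaining this with the previous display gives
\[
\left|\EE{}[\ell(Y,m(X))]-\EE{}[\ell(Y,m_w(X))]\right|\le L_\ell L_p\varepsilon',
\]
so it suffices to pick $\varepsilon'=\varepsilon/(L_\ell L_p)$ and then to produce a \emph{sigmoid} network $f_w$ achieving $\sup_{x\in\mathcal X}\|f(x)-f_w(x)\|\le\varepsilon'$. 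Note that using part~(i) rather than part~(ii) is the natural choice here, because the approximation guarantee I intend to cite is a uniform (sup-norm) statement.

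The remaining and principal step is constructing such an $f_w$ with sigmoid activation. Since $\mathcal X$ is bounded, its closure $\overline{\mathcal X}$ is compact, and $f$, being continuous, is uniformly continuous on $\overline{\mathcal X}$ and hence lies in $C(\overline{\mathcal X};\mathcal R)$. Cybenko's universal approximation theorem~\cite{cybenko1989approximation} then yields, for each of the finitely many scalar components $f_i$ of $f=(f_1,\dots,f_{\dim\mathcal R})$, a single-hidden-layer sigmoidal network approximating $f_i$ to within $\varepsilon'/\sqrt{\dim\mathcal R}$ in sup-norm; stacking these coordinate networks produces a single sigmoid network $f_w$ with $\sup_{x}\|f(x)-f_w(x)\|\le\varepsilon'$. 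The main obstacle I anticipate is technical rather than conceptual: Cybenko's theorem is stated for scalar-valued functions on a \emph{compact} domain, so I must be careful to (a) pass to the compact closure $\overline{\mathcal X}$ and argue that a uniform approximant there restricts to a uniform approximant on $\mathcal X$, and (b) convert the coordinatewise sup-norm control into control of the Euclidean norm $\|\cdot\|$ via the $\sqrt{\dim\mathcal R}$ correction factor. Once these two points are handled, the three bounds compose to deliver exactly the claimed inequality.
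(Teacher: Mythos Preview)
Your proposal is correct and follows essentially the same route as the paper: invoke Cybenko's theorem to obtain a sigmoid network $f_w$ with $\sup_x\|f(x)-f_w(x)\|\le\varepsilon/(L_\ell L_p)$, then chain the Lipschitz constants of $p$ (via Proposition~\ref{prop:universal_approximation}(i)) and $\ell$ to reach the loss bound. The only cosmetic difference is that the paper stays with sup-norm bounds throughout and passes to the expectation only at the very end, whereas you move to expectations immediately; you are also more careful than the paper about the compactness and vector-valued technicalities behind the appeal to~\cite{cybenko1989approximation}.
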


Corollary~\ref{cor:universal-approx-loss} implies that, for every $\varepsilon>0$, there exists a neural network $f_w$ and a decision map $m_w=p\circ f_w$ with
$$
\EE{}[\ell(Y,m(X)]\leq\EE{}[\ell(Y,m_w(X))]+\varepsilon.
$$
In other words, there exists a neural network-based map $m_w$ whose the expected loss exceeds that of $m$ at most by~$\varepsilon$.

\section{Training Process and Loss Function}\label{sec:training-process}
The decision map~$m^\star$ that solves~\eqref{eq:SP-static} under the unknown true distribution of $X$ and $Y$ is inaccessible. However, we can train a neural network~$m_w$ parametrized by~$w\in\mathbb R^d$ that approximates~$m^\star$. In the following we review a popular SGD-type algorithm for training~$m_w$. While the intimate relation between this widely used algorithm and problems~\eqref{eq:SP} and~\eqref{eq:SP-static} has not yet been investigated, we prove that~$m_w$ maps any observation~$X$ to an approximate posterior Bayes action corresponding to~$X$. When $X$ represents a collection of i.i.d.\ samples from $\mathbb P_{Y|Z}$, finally, we outline alternative training methods under which~$m_w$ maps~$X$ to an approximate minimizer of an ERM or a DRO problem akin to~\eqref{eq:SP}.

\subsection{SGD-Type Algorithm}
End-to-end learning problems are commonly addressed with Algorithm~\ref{algo:training} below \cite{uysal2021end,zhang2021universal}.

\begin{algorithm}
\caption{End-to-End Learning}
 \label{algo:training}
\begin{algorithmic}
 \FOR{$k\gets1,\hdots,K$}
    \STATE $g_k \gets \nabla_w \ell(Y_k,m_w(X_k))|_{w=w_{k-1}}$
    \STATE $w_k \gets w_{k-1} - \eta_k g_k$
 \ENDFOR
 \end{algorithmic}
\end{algorithm}

Note that $\nabla_w \ell(Y_k,m_w(X_k))$ constitutes an unbiased estimator for $\nabla_w\mathbb E[\ell(Y,m_w(X))]$. Thus, Algorithm~\ref{algo:training} can be viewed as an SGD method for training the decision map~$m_w$. Note also that the step size $\eta_k>0$ may depend on time.

\begin{remark}[Generalized Data Sets]
\label{rem:general-training-sets2}
Given a generalized dataset $\{(X_k,\widehat{\PP{}}_k{})\}_{k=1}^K$
as described in Remark~\ref{rem:general-training-sets}, we can use $\nabla_w\int_{\mathcal Y}\ell(y,m_w(X_k)){\rm d}\widehat{\PP{}}_k{}(y) |_{w=w_{k-1}}$ as an unbiased gradient estimator in Algorithm~\ref{algo:training} (if it is well-defined).
\end{remark}

\subsection{Bayesian Interpretation of Algorithm~\ref{algo:training}}
We now show that if the decision map $m_w$ is trained via Algorithm~\ref{algo:training}, then $m_w(X)$ constitutes an approximate posterior Bayes action. That is, it approximately solves problem~\eqref{eq:SP}, which minimizes the Bayesian posterior loss.
The Bayesian posterior $\mathbb P_{Y|X}$ reflects the information available from a given prior $\mathbb P_{(X,Y)}$ and an observation~$X$. It is widely used in various decision-making problems~\cite{kalmanfilter, stengel1994optimal, pezeshk2003bayesian,long2010active}.

\subsubsection{Theoretical Analysis of Algorithm~\ref{algo:training}} \label{ssec:theorem1}
Even though Algorithm~\ref{algo:training} is commonly used and conceptually simple, the training loss it minimizes has not yet been investigated. We now analyze Algorithm~\ref{algo:training} theoretically. The following standard assumption is required for convergence results of all methods under consideration.
\begin{assumption}[Smoothness]\label{ass:smoothness}
The loss function $\ell(y,a)$ is smooth in $a$ for all $y\in\mathcal{Y}$, the decision map $ m_w(x)$ is smooth in $w$ for all $x\in\mathcal{X}$, and their gradients are bounded.
\end{assumption}
Replacing the space of all measurable decision maps by the set of all neural network-based decision maps $m_w$ with parameter $w\in\mathbb R^d$ yields the following approximation of~\eqref{eq:SP-static}.
\begin{equation}
\label{eq:bayesian-interpretation}
    \min_{w\in\mathbb R^d}  \EE{}[\ell(Y, m_w(X))]
\end{equation}
From now on we use $\varphi(w)=\EE{}[\ell(Y, m_w(X))]$ as a shorthand for the objective function of~\eqref{eq:bayesian-interpretation}.
Next, we prove that Algorithm~\ref{algo:training} converges to a stationary point of problem~\eqref{eq:bayesian-interpretation}.
\begin{theorem}[Bayesian Interpretation of Algorithm~\ref{algo:training}] \label{thm:equivalence:algo:1}
Algorithm~\ref{algo:training} solves problem~\eqref{eq:bayesian-interpretation} in the following sense.
\begin{enumerate}
\vspace{-4mm}
    \item[(i)] The vector $g_k$ computed in Algorithm~\ref{algo:training} constitutes an unbiased stochastic gradient for $\varphi(w)$ at $w=w_{k-1}$.
    \item[(ii)] If Assumption~\ref{ass:smoothness} holds and Algorithm~\ref{algo:training} uses step sizes $\eta_k\propto 1/\sqrt{k}$, then the random iterate $\widehat{w}_K$ sampled from $\{w_k\}_{k=1}^K$ with respective probabilities $\{p_k\}_{k=1}^K$, $p_k\propto\eta_k^{-1}$, satisfies $\mathbb{E}[\|\nabla \varphi(\widehat{w}_K)\|_2^2]=O(1/\sqrt{K})$.
\end{enumerate}
\end{theorem}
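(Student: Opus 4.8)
The plan is to establish the two claims separately, as part (i) is essentially a measure-theoretic statement about interchanging differentiation and expectation, while part (ii) is a standard nonconvex SGD convergence argument applied to the objective $\varphi$.

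For part (i), I would argue that $\EE{}[g_k \mid w_{k-1}] = \nabla \varphi(w_{k-1})$. Conditioned on the iterate $w_{k-1}$, the fresh sample $(X_k, Y_k)$ is drawn independently from $\mathbb{P}_{(X,Y)}$, so $\EE{}[g_k \mid w_{k-1}] = \EE{}[\nabla_w \ell(Y, m_w(X))|_{w=w_{k-1}}]$. The key step is to interchange the gradient and the expectation, i.e. to show $\EE{}[\nabla_w \ell(Y, m_w(X))] = \nabla_w \EE{}[\ell(Y, m_w(X))] = \nabla\varphi(w)$. This is justified by the dominated convergence theorem (or the standard Leibniz rule for differentiation under the integral sign): Assumption~\ref{ass:smoothness} guarantees that $w \mapsto \ell(y, m_w(x))$ is smooth with bounded gradient, so the difference quotients are dominated by an integrable function, legitimizing the exchange. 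Thus $g_k$ is an unbiased estimator of $\nabla\varphi(w_{k-1})$, which is exactly the claim.

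For part (ii), I would invoke the standard convergence theory for SGD on smooth (possibly nonconvex) objectives. The first task is to verify the two hypotheses such a result requires: that $\varphi$ is $L$-smooth (has Lipschitz gradient) and that the stochastic gradients have bounded variance. Both follow from Assumption~\ref{ass:smoothness}: smoothness of $\ell$ in $a$ and of $m_w$ in $w$, together with boundedness of the relevant gradients, yields a finite Lipschitz constant for $\nabla\varphi$ via the chain rule, and the uniform boundedness of $g_k$ gives $\EE{}[\|g_k\|_2^2] \leq G^2$ for some constant $G$. The main computation is then the descent inequality: using $L$-smoothness of $\varphi$ and the update $w_k = w_{k-1} - \eta_k g_k$, take expectations conditional on $w_{k-1}$ and use part (i) to obtain
\begin{equation*}
\EE{}[\varphi(w_k)\mid w_{k-1}] \leq \varphi(w_{k-1}) - \eta_k \|\nabla\varphi(w_{k-1})\|_2^2 + \tfrac{L\eta_k^2}{2}\,\EE{}[\|g_k\|_2^2 \mid w_{k-1}].
\end{equation*}

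Rearranging, summing over $k=1,\ldots,K$, and telescoping the $\varphi$ terms (using that $\ell$, and hence $\varphi$, is bounded) gives $\sum_{k=1}^K \eta_k\, \EE{}[\|\nabla\varphi(w_{k-1})\|_2^2] \leq \varphi(w_0) - \varphi^\star + \tfrac{L G^2}{2}\sum_{k=1}^K \eta_k^2$. Dividing by $\sum_k \eta_k$ and recognizing the left-hand side as $\EE{}[\|\nabla\varphi(\widehat{w}_K)\|_2^2]$ for the randomized iterate $\widehat{w}_K$ sampled with probabilities $p_k \propto \eta_k$ (here I note the statement writes $p_k\propto\eta_k^{-1}$, which matches $\eta_k\propto1/\sqrt{k}$ so that $\eta_k$ and its reciprocal give consistent weighting up to the constant schedule—I would double-check the exact normalization). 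With $\eta_k \propto 1/\sqrt{k}$ we have $\sum_{k=1}^K \eta_k = \Theta(\sqrt{K})$ and $\sum_{k=1}^K \eta_k^2 = \Theta(\log K)$, so the bound is $O(\log K / \sqrt{K})$; to land the clean $O(1/\sqrt{K})$ rate one typically uses a constant step size $\eta_k \propto 1/\sqrt{K}$ tuned to the horizon, or absorbs the logarithmic factor. The main obstacle is thus not any single step but bookkeeping the step-size schedule and sampling weights so that the telescoped bound collapses to precisely the advertised $O(1/\sqrt{K})$ rate; the smoothness and unbiasedness ingredients are routine once Assumption~\ref{ass:smoothness} is in hand.
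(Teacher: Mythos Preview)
Your proof of part~(i) is essentially identical to the paper's: condition on $w_{k-1}$, use independence of the fresh sample, and interchange gradient and expectation via dominated convergence using Assumption~\ref{ass:smoothness}.

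For part~(ii) the paper takes a different route: it does not carry out the descent-lemma computation at all but simply invokes \cite[Theorem~3.5]{wang2021convergence} after verifying its hypotheses (unbiasedness from part~(i), $L$-smoothness of $\varphi$, bounded variance of $g_k$, and boundedness of $\EE{}[\varphi(w_k)-\varphi^\star]$, all from Assumption~\ref{ass:smoothness} and boundedness of $\ell$). Your approach is the standard self-contained argument and is correct as far as it goes, but as you yourself flag, it naturally yields weights $p_k\propto\eta_k$ and a rate $O(\log K/\sqrt{K})$, which do not match the stated $p_k\propto\eta_k^{-1}$ and $O(1/\sqrt{K})$. The paper sidesteps this bookkeeping entirely by deferring to the cited reference, so the discrepancy you noticed is not resolved in the paper's own text either; to reproduce the exact statement you would need to consult \cite{wang2021convergence} rather than redo the elementary telescoping. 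Your version has the advantage of being self-contained and transparent about where the rate comes from; the paper's has the advantage of matching the theorem as stated without further work.
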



Theorem~\ref{thm:equivalence:algo:1} implies that if Assumption~\ref{ass:smoothness} holds, then the gradient of $\varphi(w)$ at the random iterate $\widehat{w}_K$ generated by Algorithm~\ref{algo:training} is small in expectation and---by virtue of Markov's inequality---also small with high probability. Thus, $\widehat w_K$ converges in probability to a stationary point~$w^\star$ of problem~\eqref{eq:bayesian-interpretation} as $K$ tends to infinity. Throughout the subsequent discussion we assume that $w^\star$ is in fact a minimizer of~\eqref{eq:bayesian-interpretation}.
Theorem~\ref{thm:equivalence:algo:1} then suggests that the neural network~$m_{w^\star}$ maps any observation~$X$ to an {\em approximation} of the posterior Bayes action corresponding to~$X$. To see this, recall that any minimizer~$m^\star$ of~\eqref{eq:SP-static} maps any observation~$X$ to the
{\em exact} posterior Bayes action~$m^\star(X)$, which solves the original stochastic optimization problem~\eqref{eq:SP}. If the family of neural networks $\mathcal M_w=\{m_w:w\in\mathbb R^d\}$ is rich enough to contain a minimizer~$m^\star$ of~\eqref{eq:SP-static}, then~\eqref{eq:SP-static} and~\eqref{eq:bayesian-interpretation} are clearly equivalent. A sufficient condition for this is that for every function~$m_0\in\mathcal M_w$, every measurable set~$\mathcal B\subseteq \mathcal X$ and every bounded measurable function~$m_1\in\mathcal M$, the function $m:\mathcal X\to \mathcal A$ defined through $m(x)=m_0(x)$ if $x\in\mathcal B$ and $m(x)=m_1(x)$ if $x\notin\mathcal B$ is also a member of~$\mathcal M_w$; see~\citep[Theorem~14.60]{rockafellar2009variational}. The next corollary describes a situation in which this condition holds.

\begin{proposition}[Finite Observation Spaces]
\label{prop:bayesian-discrete}
Assume that $\mathcal X=\{x_1,\ldots, x_n\}$ is finite and that the family of neural networks $\mathcal M_w=\{m_w:w\in\mathbb R^d\}$ is rich enough such that for every $a\in\mathbb R^n$ there exists $w\in\mathbb R^d$ with $a_i=m_w(x_i)$ for all $i=1,\ldots,n$. Then, problems~\eqref{eq:SP-static} and~\eqref{eq:bayesian-interpretation} are equivalent.
\end{proposition}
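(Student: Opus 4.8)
The plan is to exploit the finiteness of $\mathcal X$ to collapse both optimization problems to the \emph{same} finite-dimensional minimization over tuples of actions, so that equivalence becomes transparent without invoking the general interchangeability machinery of \citep[Theorem~14.60]{rockafellar2009variational}. The starting point is that, since $X$ is supported on $\mathcal X=\{x_1,\ldots,x_n\}$, the objective shared by~\eqref{eq:SP-static} and~\eqref{eq:bayesian-interpretation} depends on a decision map only through its values on $\mathcal X$, i.e.\ through the tuple $(m(x_1),\ldots,m(x_n))$. Writing $\pi_i=\PP{}(X=x_i)$ and using the tower property, I would decompose
\[
    \EE{}[\ell(Y,m(X))]=\sum_{i=1}^n \pi_i\,\EE{}[\ell(Y,m(x_i))\mid X=x_i],
\]
which exhibits the objective as an additive, \emph{decoupled} function of the $n$ actions $m(x_1),\ldots,m(x_n)$; boundedness of $\ell$ guarantees every term is finite.

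Next I would evaluate each problem's optimal value from this representation. A measurable map on a finite domain is nothing but an arbitrary assignment $x_i\mapsto m(x_i)\in\mathcal A$, and the feasible set over $\mathcal X$ is the product $\mathcal A^n$ with no coupling across distinct atoms. Hence the minimization over the full space $\mathcal M$ splits atom by atom,
\[
    \min_{m\in\mathcal M}\EE{}[\ell(Y,m(X))]=\sum_{i=1}^n \pi_i\min_{a\in\mathcal A}\EE{}[\ell(Y,a)\mid X=x_i].
\]
For problem~\eqref{eq:bayesian-interpretation}, the richness assumption states precisely that $w\mapsto(m_w(x_1),\ldots,m_w(x_n))$ is onto the feasible set $\mathcal A^n$, so as $w$ ranges over $\mathbb R^d$ the tuple of realized actions sweeps out every element of $\mathcal A^n$; the minimization over $w$ therefore attains exactly the same decoupled optimum. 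Comparing the two right-hand sides shows the optimal values coincide, and any optimal tuple of the full problem is realized by some $w$, so minimizers correspond and the problems are equivalent.

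The inequality ``value of~\eqref{eq:bayesian-interpretation} $\ge$ value of~\eqref{eq:SP-static}'' is automatic, since every $m_w$ is measurable and thus $\mathcal M_w\subseteq\mathcal M$; the content of the statement is the reverse inequality, which is exactly where richness (surjectivity onto $\mathcal A^n$) is used. The step requiring the most care is the interchange of $\min$ and $\sum$ in the second display: it is legitimate only because the feasible region over the finite domain is a product set with no cross-atom constraints, and because finiteness lets me choose a minimizing action separately at each atom without any measurable-selection argument. A minor edge case is atoms with $\pi_i=0$, whose contribution vanishes and whose action is immaterial; richness still permits realizing whatever value one likes there, so this does not affect the argument.
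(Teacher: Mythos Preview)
Your argument is correct. The paper does not give a formal proof of this proposition but, in the paragraph immediately preceding it, indicates the intended route: verify that $\mathcal M_w$ satisfies the decomposability hypothesis of the interchangeability principle \citep[Theorem~14.60]{rockafellar2009variational} (namely that splicing any $m_0\in\mathcal M_w$ with any bounded measurable $m_1\in\mathcal M$ over any measurable $\mathcal B\subseteq\mathcal X$ again lies in $\mathcal M_w$), which on a finite $\mathcal X$ is immediate from richness, and then conclude equivalence of~\eqref{eq:SP-static} and~\eqref{eq:bayesian-interpretation} from that theorem. Your approach is genuinely different and more elementary: you bypass the interchangeability machinery altogether by using finiteness to collapse both problems to the \emph{same} minimization over tuples in $\mathcal A^n$, which then separates atom by atom. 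This is self-contained and exposes exactly where richness is consumed---as surjectivity of $w\mapsto(m_w(x_i))_i$ onto $\mathcal A^n$---whereas the paper's route situates the result inside the general framework already used to relate~\eqref{eq:SP} and~\eqref{eq:SP-static} and would extend more naturally beyond finite $\mathcal X$. One minor remark: the hypothesis as stated asks for surjectivity onto $\mathbb R^n$, while your proof only needs surjectivity onto $\mathcal A^n$; since the latter is weaker, your argument goes through a fortiori.
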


Under the assumptions of Proposition~\ref{prop:bayesian-discrete} the family~$\mathcal{M}_w$ of neural networks is able to model a look-up table on $\mathcal{X}$. This implies that if Algorithm~\ref{algo:training} converges to the global minimizer~$w^\star$ of~\eqref{eq:bayesian-interpretation}, then $m_{w^\star}$ coincides exactly with the posterior Bayes action map. Otherwise, Algorithm~\ref{algo:training} uses SGD to approximate~$m^\star$ or, in other words, to seek an architecture-regularized approximation of~$m^\star$.

The above insights highlight the importance of choosing a training dataset that is reflective of one's prior belief about the distribution of the unobserved confounder~$Z$. Put differently, it is crucial that the dataset $\{(X_k, Y_k)\}_{k=1}^K$ is consistent with the prior data distribution $\mathbb P_{(X,Y)}$ during deployment. Indeed, a strong prior induced by the dataset may significantly bias the decisions of the model. In addition, the above insights provide some justification for augmenting the dataset with rare corner cases. Indeed, excluding corner cases amounts to setting their prior probabilities to~$0$, which may have undesirable consequences. In the context of algorithmic fairness~\cite{fair-ML19}, for instance, using a training dataset in which one demographic group is under\-represented amounts to working with a biased prior distribution and results in subpar predictions for the underrepresented group~\cite{buolamwini2018gender}.

\subsubsection{Minimum Mean-Square Estimation}\label{ssec:mean:estimation}
\begin{figure}
    \centering
    \begin{subfigure}[b]{0.45\linewidth}
    \centering
    \includegraphics[width=\linewidth]{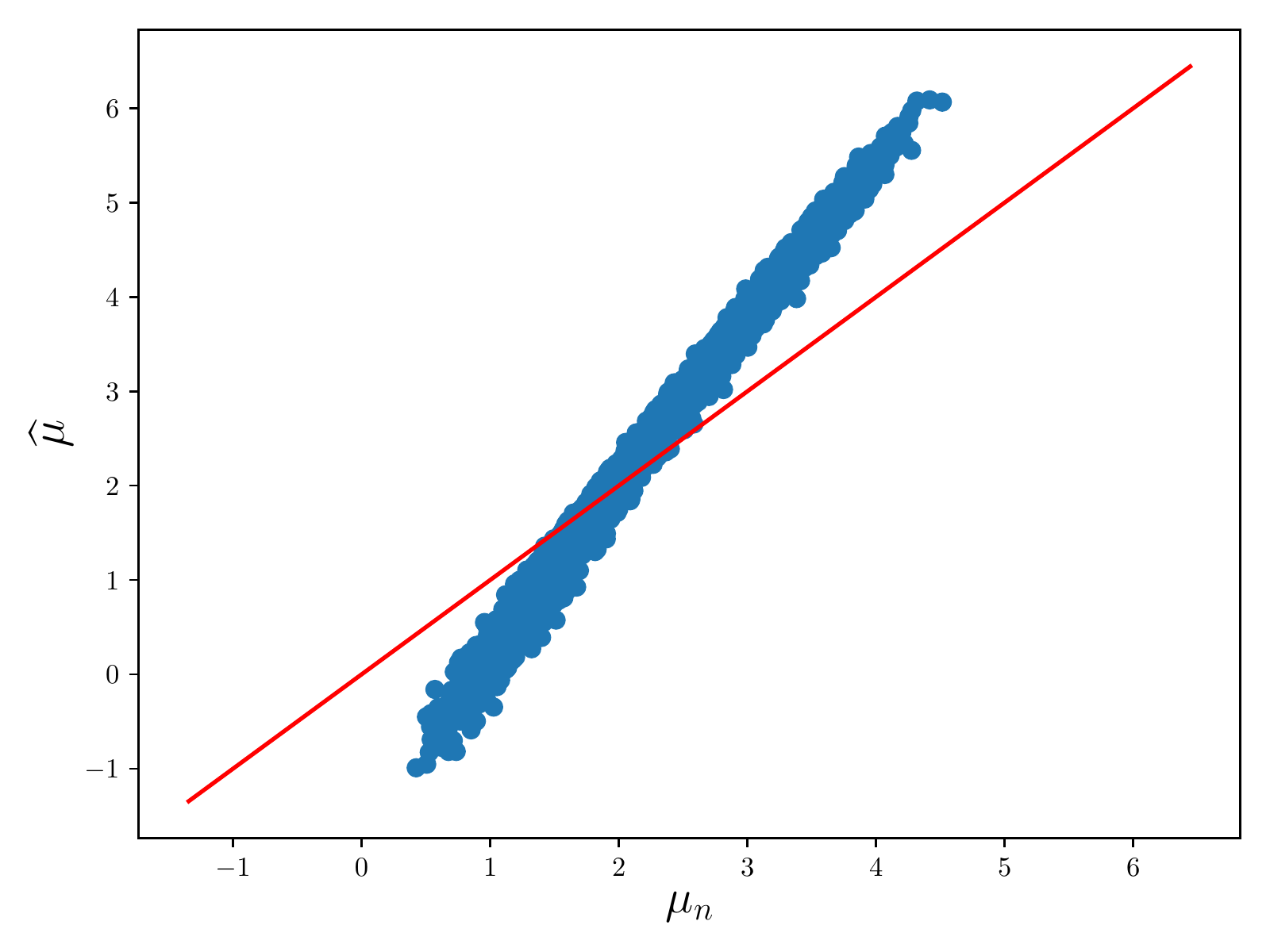}
    \caption{NN vs ERM}\label{fig:mean-estimation:1}
    \end{subfigure}
    \hfill
    \begin{subfigure}[b]{0.45\linewidth}
    \centering
    \includegraphics[width=\linewidth]{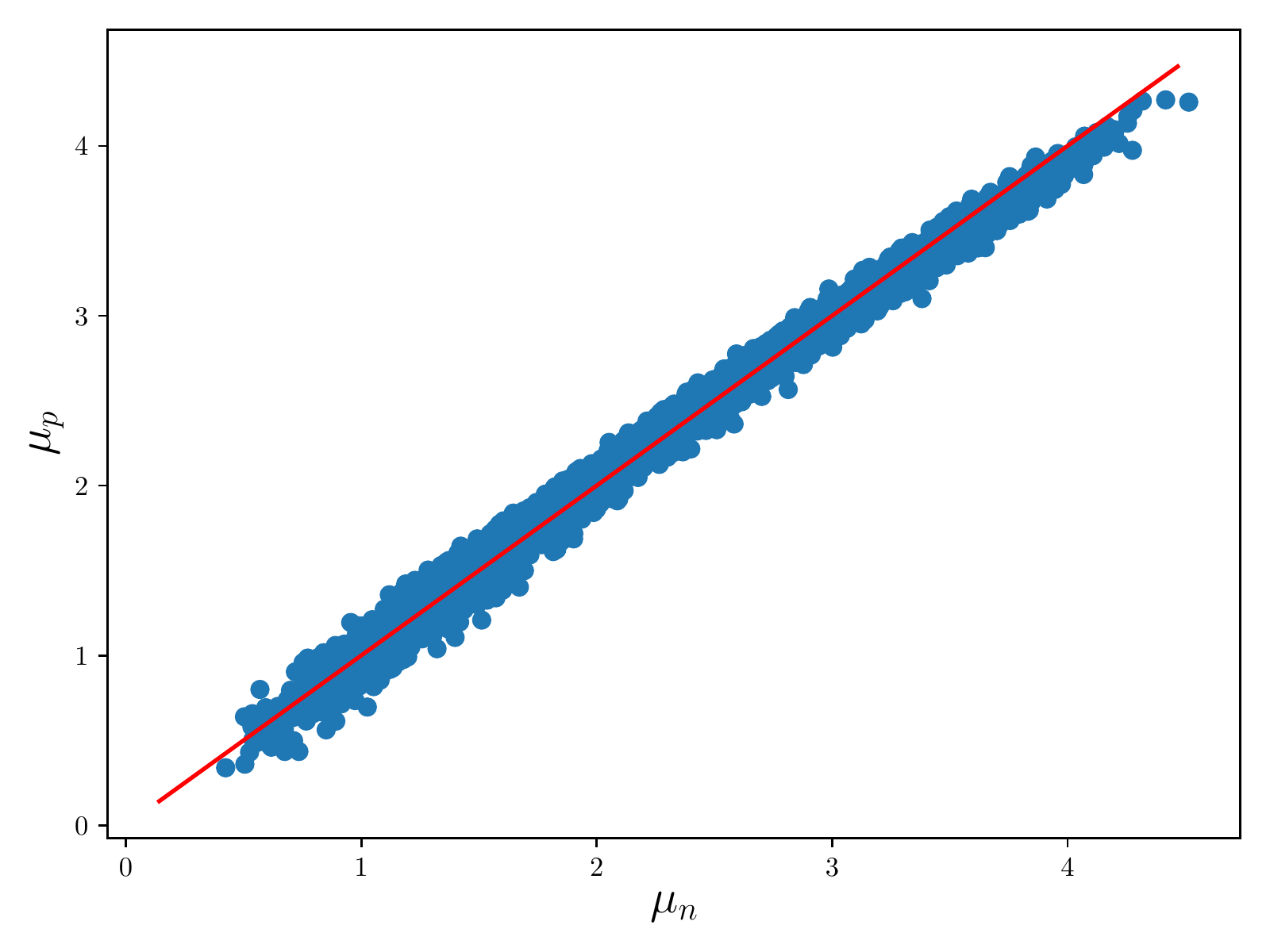}
    \caption{NN vs Posterior Mean}\label{fig:mean-estimation:2}
    \end{subfigure}
    \caption{Comparison of the approximate posterior mean $\widehat \mu_{\rm NN}$ output by Algorithm~\ref{algo:training} against $\widehat \mu_{\rm ERM}$ and $\widehat \mu_{\rm MMSE}$. As the prior~$\mathbb P_Y$ is concentrated around $\mathbb E[Y]=2$, both the posterior mean $\widehat \mu_{\rm MMSE}$ as well as its approximation $\widehat \mu_{\rm NN}$ display a bias towards $2$.}
    \label{fig:mean-estimation}
    \vspace{-0.5cm}
\end{figure}
In order to gain further intuition, consider the problem of estimating the mean~$Z$ of a Gaussian random variable $Y\sim \mathcal N(Z,4)$ based on an observation $X=[\widehat Y_1,\ldots,\widehat Y_{20}]$ of $20$ i.i.d.\ samples drawn from~$\PP{Y|Z}$. Assume that $Z\sim\mathcal{N}(2,0.25)$. The minimum mean-square estimator coincides with the solution of the stochastic optimization problem
\begin{equation}
\label{eq:mean-estimation}
    \min_{a\in \mathbb{R}} \EE{}[(Y - a)^2|X],
\end{equation}
which is readily identified as an instance of~\eqref{eq:SP}. In order to compute the minimum mean-square estimator simultaneously for all realizations of~$X$, we should solve the corresponding instance of~\eqref{eq:SP-static}. A simple calculation exploiting our distributional assumptions shows that~\eqref{eq:SP-static} is solved by~$m^\star(X) =(8+\frac{1}{4}\sum_{i=1}^{20}\widehat Y_i)/9$.
Absent any information about the joint distribution of~$X$, $Y$ and~$Z$, we have to solve the approximate problem~\eqref{eq:bayesian-interpretation} instead. Specifically, we optimize over decision maps of the form $m_w=p\circ f_w=f_w$, where the prescriptor~$p$ is the identity function, and the feature extractor~$f_w$ is a feed-forward neural network with an input layer accommodating $20$ neurons and linear activation functions, two hidden layers with $500$~neurons and ReLU-activation functions, and an output layer with $1$ neuron and a linear activation function. We solve the resulting instance of~\eqref{eq:bayesian-interpretation} using Algorithm~\ref{algo:training} with $K=5\times 10^6$ training samples $\{(X_k,Y_k)\}_{k=1}^K$ to find an approximate posterior Bayes action map~$m_{w_K}$. Given an independent test sample $X=[\widehat Y_1,\ldots,\widehat Y_{20}]$, this map outputs a neural network-based approximation~$\widehat \mu_{\rm NN}=m_{w_K}(X)$ of the exact posterior mean $\widehat\mu_{\rm MMSE}= m^\star(X)$. We compare it against the sample mean $\widehat\mu_{\rm ERM}=\frac{1}{20}\sum_{n=1}^{20} \widehat Y_i$, which minimizes the empirical risk. Figure~\ref{fig:mean-estimation} visualizes the differences between these estimators on $5{,}000$ test samples generated by sampling from $\mathbb P_{X|Z}$ for 
$5{,}000$ equidistant values of~$Z$ between~$0$ and~$5$. We observe that $\widehat\mu_{\rm MMSE}$ closely approximates the posterior mean because the scatter plot concentrates on the identity line in red. When compared to~$\widehat\mu_{\rm ERM}$, $\widehat\mu_{\rm NN}$ displays a bias towards 2 due to the strong prior.

\subsection{Alternative Decision Models}\label{sec:erm}

While minimizing the Bayesian posterior loss is uncommon in the literature, ERM and DRO are widely used if the observation $X=[\widehat Y_1,\ldots, \widehat Y_N]$ consists of $N$ i.i.d.\ samples from the unknown distribution $\mathbb P_{Y|Z}$. We now propose new neural network-based end-to-end learning algorithms that output approximate solution maps for problems~\eqref{eq:erm} and~\eqref{eq:dro}. 
An overview of our main results is provided in Table~\ref{tab:summary}.

\begin{table}[]
\centering
\caption{Overview of main results.}
\label{tab:summary}
\vspace{0.1cm}
\begin{tabular}{|c|c|c|}
 \hline
 Decision Model & Algorithm & Guarantees \\
 \hline
 Bayesian  & Algorithm~\ref{algo:training}  & Theorem~\ref{thm:equivalence:algo:1}  \\
 ERM & Algorithm~\ref{algo:training-erm}  & Theorem~\ref{thm:equivalence:algo:2}  \\
    DRO & Algorithm~\ref{algo:training-dro}  & Theorem~\ref{thm:equivalence:algo:3}  \\
\hline
\end{tabular}
\vspace{-0.5cm}
\end{table}

\subsubsection{Empirical Risk Minimization}
Given observations of the form $X = [\widehat Y_1,\ldots, \widehat Y_N]$, it is more common to solve the ERM problem~\eqref{eq:erm} instead of the Bayesian optimization problem~\eqref{eq:SP}. 
Algorithm~\ref{algo:training-erm} below learns a parametric decision map~$m_w$ with the property that $m_w(X)$ approximately solves~\eqref{eq:erm} for every realization of~$X$.


\begin{algorithm}
\caption{End-to-End Learning for ERM}
 \label{algo:training-erm}
 \begin{algorithmic}
 \FOR{$k\gets1,\hdots,K$}
    \STATE $g_k\gets\nabla_w \frac{1}{N}\sum_{n=1}^N\ell(\widehat{Y}_{k,n},m_w(X_{k}))|_{w=w_{k-1}}$
    \STATE $w_k \gets w_{k-1} - \eta_k g_k$
    \ENDFOR
 \end{algorithmic}
\end{algorithm}

Unlike Algorithm~\ref{algo:training}, which evaluates the loss in the gradient computations at a single sample~$Y_k$, which is independent of $X_k$ conditional on the unobserved confounder~$Z_k$, Algorithm~\ref{algo:training-erm} evaluates the loss at~$N$ samples $\{\widehat Y_{k,n}\}_{n=1}^N$, which are the components of~$X_k$. Thus, Algorithm~\ref{algo:training-erm} uses a training set that consists only of observations $\{X_k\}_{k=1}^K$ but not of the corresponding problem parameters $\{Y_k\}_{k=1}^K$.


Using the interchangeability principle~\citep[Theorem~14.60]{rockafellar2009variational} and the law of iterated conditional expectations, one can show that problem~\eqref{eq:erm} is equivalent to
\begin{equation}
\label{eq:ERM-static}
    \min_{m\in \mathcal{M}} \textstyle \EE{}[ \frac{1}{N}\sum_{n=1}^N\ell(\widehat Y_n, m(X))].
\end{equation}
Thus, $m^\star$ solves~\eqref{eq:ERM-static} if and only if $m^\star(X)$ solves~\eqref{eq:erm} for all realizations of $X$ (Lemma~\ref{lemma:erm-equivalence}). 
Next, we show that Algorithm~\ref{algo:training-erm} targets the following approximation of~\eqref{eq:ERM-static}.
\begin{equation}
\label{eq:bayesian-interpretation-erm}
    \min_{w\in\mathbb R^d} \textstyle \EE{}[ \frac{1}{N}\sum_{n=1}^N\ell(\widehat Y_n, m_w(X))]
\end{equation}
Below we abbreviate the objective function of~\eqref{eq:bayesian-interpretation-erm} by $\psi(w)$.


\begin{theorem}[ERM Interpretation of Algorithm~\ref{algo:training-erm}] \label{thm:equivalence:algo:2}
Algorithm~\ref{algo:training-erm} solves problem~\eqref{eq:bayesian-interpretation-erm} in the following sense.
\begin{itemize}
\vspace{-4mm}
    \item[(i)] The vector $g_k$ computed in Algorithm~\ref{algo:training-erm} constitutes an unbiased stochastic gradient for $\psi(w)$ at $w=w_{k-1}$. 
    \item[(ii)] If Assumption~\ref{ass:smoothness} holds and Algorithm~\ref{algo:training-erm} uses step sizes $\eta_k\propto 1/\sqrt{k}$, then the random iterate $\widehat{w}_K$ sampled from $\{w_k\}_{k=1}^K$ with respective probabilities $\{p_k\}_{k=1}^K$, $p_k\propto\eta_k^{-1}$, satisfies $\mathbb{E}[\|\nabla \psi(\widehat{w}_K)\|_2^2]=O(1/\sqrt{K})$.
\end{itemize}
\end{theorem}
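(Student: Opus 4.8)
The plan is to follow the template of the proof of Theorem~\ref{thm:equivalence:algo:1} almost verbatim, since Theorem~\ref{thm:equivalence:algo:2} makes the identical two-part claim for the objective $\psi$ of~\eqref{eq:bayesian-interpretation-erm} in place of $\varphi$. The only genuinely new ingredient is to verify that the gradient estimator $g_k$ of Algorithm~\ref{algo:training-erm} remains unbiased even though the loss is now evaluated at the $N$ components $\widehat Y_{k,1},\ldots,\widehat Y_{k,N}$ of the observation $X_k$ itself, rather than at a single fresh draw $Y_k$ as in Algorithm~\ref{algo:training}.

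For part~(i), I would introduce the deterministic map $h(w,x)=\frac{1}{N}\sum_{n=1}^N\ell(\widehat y_n,m_w(x))$ associated with an observation $x=[\widehat y_1,\ldots,\widehat y_N]$, so that the objective of~\eqref{eq:bayesian-interpretation-erm} reads $\psi(w)=\EE{}[h(w,X)]$ and the iterate gradient is exactly $g_k=\nabla_w h(w,X_k)|_{w=w_{k-1}}$. Because the fresh observation $X_k$ is drawn independently of $w_{k-1}$, which is measurable with respect to $X_1,\ldots,X_{k-1}$, taking the conditional expectation gives $\EE{}[g_k\mid w_{k-1}]=\EE{}[\nabla_w h(w,X_k)]|_{w=w_{k-1}}$; interchanging gradient and expectation then yields $\nabla_w\EE{}[h(w,X)]|_{w=w_{k-1}}=\nabla\psi(w_{k-1})$. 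The interchange is the one point that needs care: it is licensed by Assumption~\ref{ass:smoothness}, under which $w\mapsto\ell(\widehat y_n,m_w(x))$ is smooth with a gradient that is uniformly bounded (by the chain rule, as $\ell$ has bounded gradient in $a$ and $m_w$ has bounded gradient in $w$), so that dominated convergence applies. I would note that this step is in fact cleaner here than in Theorem~\ref{thm:equivalence:algo:1}: since $g_k$ is a deterministic function of the single fresh sample $X_k$, no appeal to the conditional independence of $Y_k$ and $X_k$ given the confounder $Z_k$ is needed.

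For part~(ii), I would invoke the standard convergence analysis of stochastic gradient descent on smooth nonconvex objectives, exactly as in the proof of Theorem~\ref{thm:equivalence:algo:1}. The prerequisites are all supplied by Assumption~\ref{ass:smoothness} together with the boundedness of $\ell$: the gradient of $\psi$ is Lipschitz continuous (from smoothness of $\ell$ in $a$, of $m_w$ in $w$, and boundedness of their derivatives), $\psi$ is bounded below (as $\ell$ is bounded), and $\EE{}[\|g_k\|^2]$ is uniformly bounded (again from the bounded gradients). Combining the descent lemma with part~(i), taking expectations, and telescoping over $k=1,\ldots,K$ produces a bound of the form $\sum_{k=1}^K\eta_k\,\EE{}[\|\nabla\psi(w_{k-1})\|_2^2]\le C_0+C_1\sum_{k=1}^K\eta_k^2$ for constants $C_0,C_1$. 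Evaluating the weighted average corresponding to the randomized selection of $\widehat w_K$ and inserting $\eta_k\propto 1/\sqrt k$, so that $\sum_{k=1}^K\eta_k=\Theta(\sqrt K)$, then gives the claimed $O(1/\sqrt K)$ rate.

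I expect no serious obstacle, as the argument is structurally identical to that of Theorem~\ref{thm:equivalence:algo:1}, and the only substantive check---unbiasedness of $g_k$---is more direct in the ERM setting. The one place warranting explicit justification is the interchange of differentiation and expectation in part~(i); everything else is a routine transcription of the earlier nonconvex-SGD argument with $\psi$ replacing $\varphi$.
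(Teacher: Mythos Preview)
Your proposal is correct and takes essentially the same approach as the paper, which simply states that the proof widely parallels that of Theorem~\ref{thm:equivalence:algo:1} and omits the details. If anything, you supply more detail than the paper itself, correctly observing that the unbiasedness check is actually cleaner in the ERM setting since $g_k$ is a deterministic function of the fresh sample $X_k$ alone.
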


In analogy to Section~\ref{ssec:mean:estimation}, we can use
Algorithm~\ref{algo:training-erm} to train an approximate ERM action map~$m_{w_K}$, which assigns each observation $X=[\widehat Y_1,\ldots,\widehat Y_{20}]$ an approximate sample mean $\widehat \mu_{\rm NN}=m_{w_K}(X)$. Figure~\ref{fig:mean-estimation-erm} compares $\widehat \mu_{\rm NN}$ against the exact sample mean $\widehat \mu_{\rm ERM}$ and the posterior mean $\widehat \mu_{\rm MMSE}$.
\begin{figure}
    \centering
    \begin{subfigure}[b]{0.45\linewidth}
    \centering
    \includegraphics[width=\linewidth]{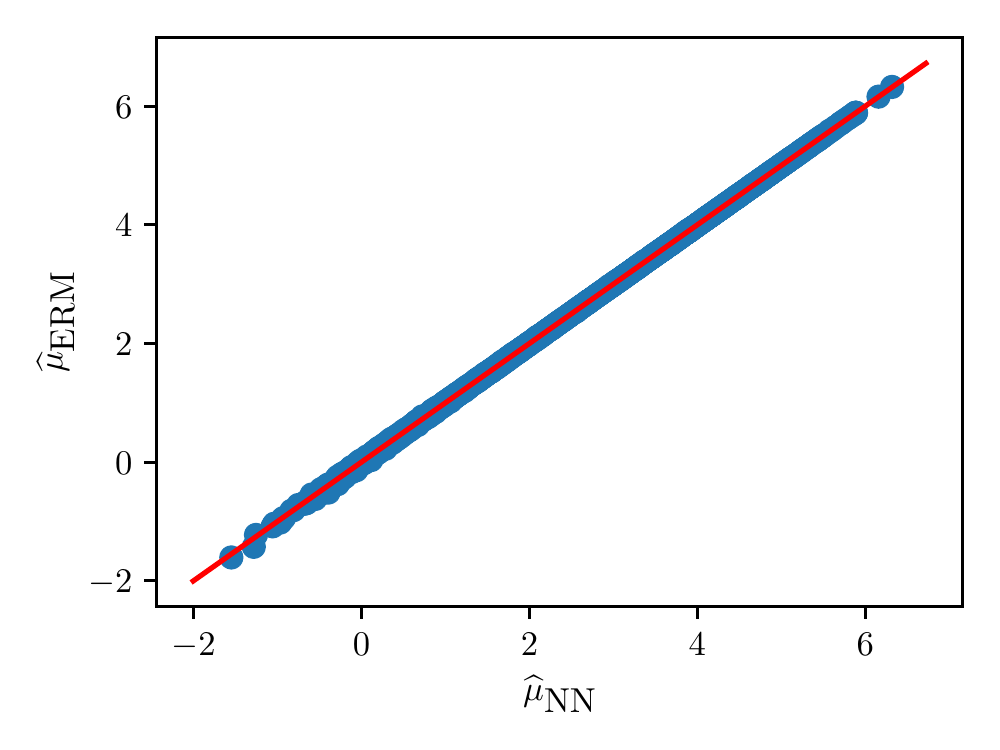}
    \caption{NN vs ERM}
    \end{subfigure}
    \hfill
    \begin{subfigure}[b]{0.45\linewidth}
    \centering
    \includegraphics[width=\linewidth]{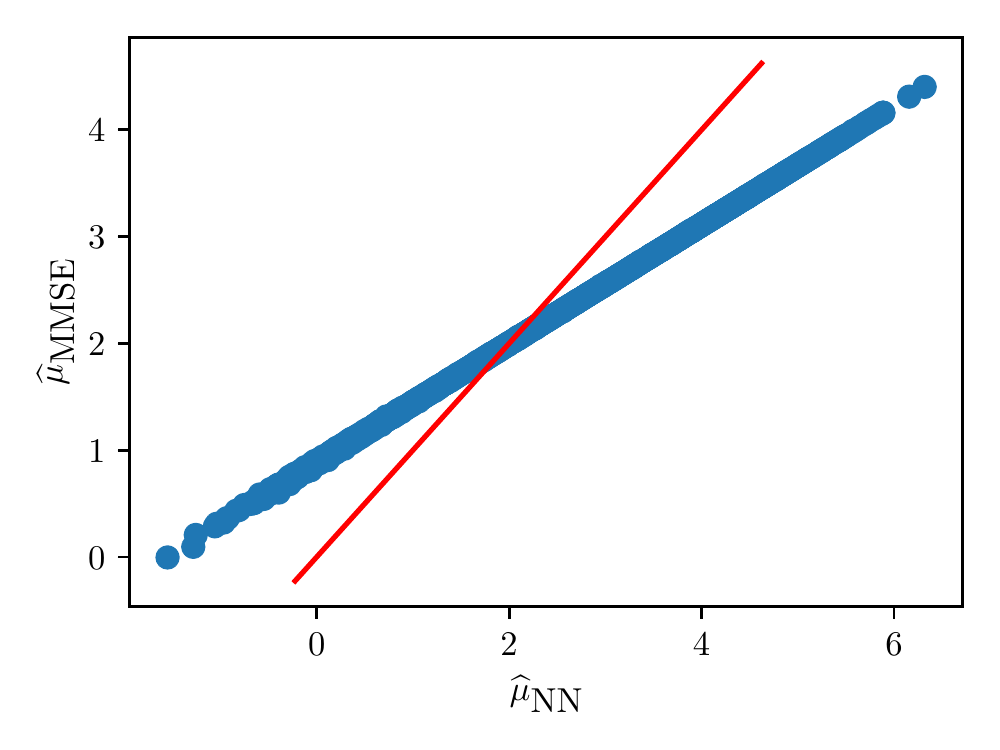}
    \caption{NN vs Posterior Mean}
    \end{subfigure}
    \caption{Comparison of the approximate sample mean $\widehat \mu_{\rm NN}$ output by Algorithm~\ref{algo:training-erm} against $\widehat \mu_{\rm ERM}$ and $\widehat \mu_{\rm MMSE}$.}
    \label{fig:mean-estimation-erm}
    \vspace{-0.5cm}
\end{figure}

\subsubsection{Distributionally Robust Optimization}
Another generalization of Algorithms~\ref{algo:training} and~\ref{algo:training-erm} is Algorithm~\ref{algo:training-dro} below, which learns a parametric decision map $m_w$ with the property that $m_w(X)$ approximately solves the DRO problem~\eqref{eq:dro} for every realization of $X$. Much like Algorithm~\ref{algo:training-erm}, Algorithm~\ref{algo:training-dro} uses a training set that consists only of observations but not of the corresponding problem parameters.


\begin{algorithm}
\caption{End-to-End Learning for DRO}
 \label{algo:training-dro}
 \begin{algorithmic}
 \FOR{$k\gets1,\hdots,K$}
 \STATE $\mathbb{Q}_k^\star \in  \arg\max_{\mathbb{Q}\in\mathcal{U}(X_k)}\int_{\mathcal Y}\ell(y, m_{w_{k-1}}(X_{k}))\,{\rm d}\mathbb{Q}(y)$
    \STATE $g_k \gets \nabla_w \int_{\mathcal Y}\ell(y,m_w(X_{k}))\,{\rm d}\mathbb{Q}_k^\star (y)|_{w=w_{k-1}}$
    \STATE $w_k \gets w_{k-1} - \eta_k g_k$
    \ENDFOR
 \end{algorithmic}
\end{algorithm}

One can show (Lemma~\ref{lemma:dro-equivalence}) that problem~\eqref{eq:dro} is equivalent~to 
\begin{equation}
\label{eq:DRO-static}
    \min_{m\in \mathcal{M}} \EE{}\left[ \max_{\mathbb Q\in\mathcal{U}(X)}\textstyle  \int_{\mathcal Y}\ell(y,m(X)) \, {\rm d}\mathbb Q(y) \right].
\end{equation}
Algorithm~\ref{algo:training-dro} then targets the following approximation of~\eqref{eq:DRO-static}.
\begin{equation}
    \label{eq:bayesian-interpretation-dro}
    \min_{w\in \mathbb R^d} \EE{}\left[ \max_{\mathbb Q\in\mathcal{U}(X)}\textstyle  \int_{\mathcal Y}\ell(y,m_w(X)) \, {\rm d}\mathbb Q(y) \right].
\end{equation}
Below we abbreviate the objective function of~\eqref{eq:bayesian-interpretation-dro} by $\chi(w)$.



\begin{theorem}[DRO Interpretation of Algorithm~\ref{algo:training-dro}] \label{thm:equivalence:algo:3}
Algorithm~\ref{algo:training-dro} solves problem~\eqref{eq:dro} in the following sense. If Assumption~\ref{ass:smoothness} holds, $\mathcal{U}(X)\neq \emptyset$ is weakly compact and the maximization problem over~$\mathbb Q$ in~\eqref{eq:bayesian-interpretation-dro} has $\mathbb P$-almost surely a unique solution, then the vector $g_k$ computed in Algorithm~\ref{algo:training-dro} is an unbiased stochastic gradient for $\chi(w)$ at $w=w_{k-1}$.
\end{theorem}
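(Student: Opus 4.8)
The plan is to recognize $g_k$ as a single-sample realization of the gradient of the inner DRO value function and then establish unbiasedness via Danskin's theorem together with an interchange of differentiation and expectation. To this end, I would first introduce the value function
$$h(w,x)=\max_{\mathbb Q\in\mathcal U(x)}\int_{\mathcal Y}\ell(y,m_w(x))\,\mathrm d\mathbb Q(y),$$
so that $\chi(w)=\EE{X}[h(w,X)]$, and denote by $\mathbb Q^\star(w,x)$ the (by hypothesis $\mathbb P$-a.s.\ unique) maximizer. With $X_k$ drawn from $\PP{X}$ and independent of $w_{k-1}$, the vector computed in Algorithm~\ref{algo:training-dro} is precisely $g_k=\int_{\mathcal Y}\nabla_w\ell(y,m_w(X_k))|_{w=w_{k-1}}\,\mathrm d\mathbb Q^\star(w_{k-1},X_k)(y)$, i.e., the partial $w$-gradient of the objective evaluated at the optimal distribution.

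The first main step is to show that this partial gradient coincides with the full gradient $\nabla_w h(w_{k-1},X_k)$. Here I would invoke Danskin's theorem, whose hypotheses are met as follows: the feasible set $\mathcal U(x)$ is weakly compact by assumption; the map $\mathbb Q\mapsto\int_{\mathcal Y}\ell(y,m_w(x))\,\mathrm d\mathbb Q(y)$ is weakly continuous because $\ell$ is bounded and continuous in $y$, so the maximum is attained; the integrand $\ell(y,m_w(x))$ is smooth in $w$ with bounded, continuous gradient by Assumption~\ref{ass:smoothness}, which permits differentiation under the integral sign and yields $\nabla_w\int\ell\,\mathrm d\mathbb Q=\int\nabla_w\ell\,\mathrm d\mathbb Q$; and the maximizer is unique. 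Under these conditions Danskin's theorem gives that $h(\cdot,x)$ is differentiable with $\nabla_w h(w,x)=\int_{\mathcal Y}\nabla_w\ell(y,m_w(x))\,\mathrm d\mathbb Q^\star(w,x)(y)$, which is exactly $g_k$ when $x=X_k$ and $w=w_{k-1}$.

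The second step is to pass to the expectation over $X$. Conditioning on $w_{k-1}$ and using the independence of $X_k$ gives $\EE{}[g_k\mid w_{k-1}]=\EE{X}[\nabla_w h(w_{k-1},X)]$, and it then remains to interchange the gradient with the expectation, $\nabla\EE{X}[h(w,X)]=\EE{X}[\nabla_w h(w,X)]$. This interchange is justified by the uniform boundedness of $\nabla_w\ell(y,m_w(x))$ from Assumption~\ref{ass:smoothness}, which supplies an integrable dominating function and licenses the dominated convergence theorem (equivalently, the Leibniz rule for differentiation under the expectation). Combining the two steps yields $\EE{}[g_k\mid w_{k-1}]=\nabla\chi(w_{k-1})$, which is the claim.

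I expect the main obstacle to be the rigorous verification of Danskin's theorem in this measure-theoretic setting, since the optimization variable $\mathbb Q$ ranges over an infinite-dimensional set of probability measures rather than a finite-dimensional compact set. The crucial ingredients are the weak continuity of the linear functional $\mathbb Q\mapsto\int\ell\,\mathrm d\mathbb Q$ on the weakly compact ambiguity set and the almost-sure uniqueness of the maximizer, the latter being exactly what rules out the set-valued subdifferential that would otherwise appear in the envelope formula and force $g_k$ to be merely a selection rather than the true gradient. By contrast, the differentiation-under-the-integral and the gradient--expectation interchange are comparatively routine given the boundedness assumptions.
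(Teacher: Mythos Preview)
Your proposal is correct and follows essentially the same route as the paper: apply Danskin's theorem (the paper cites the version in Shapiro et al.) to identify $g_k$ with $\nabla_w h(w_{k-1},X_k)$ using the assumed uniqueness of the maximizer, and then interchange gradient and expectation via the boundedness in Assumption~\ref{ass:smoothness}. If anything, your outline is more explicit than the paper's in checking the hypotheses of Danskin's theorem and in flagging the infinite-dimensional nature of the maximization over~$\mathbb Q$.
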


The assumption that the maximization problem over~$\mathbb Q$ has a unique solution is violated by popular ambiguity sets such as the Wasserstein ambiguity set \cite{ref:DROtutorial-19}. It is satisfied, however, by the Kullback-Leibler \cite{hu2013kullback} and Sinkhorn \cite{wang2021sinkhorn} ambiguity sets.

\subsection{Gradient Projection Phenomenon}\label{sec:additional-considerations}
Despite their conceptual merits, optimization and projection layers in the decision map $m_w$ may impede the convergence of gradient-based training algorithms. Indeed, if the current iterate~$w\notin\mathcal A$ is {\em in}feasible, then optimization and projection layers in $m_w$ have a tendency to push the gradient of $\varphi(w)$ to a subspace of $\mathbb R^d$ that is (approximately) perpendicular to the shortest path from~$w$ to~$\mathcal A$. Thus, gradient-based methods like Algorithm~\ref{algo:training} may circle around $\mathcal A$ without ever reaching a feasible point. To our best knowledge, this phenomenon has not yet been studied or even recognized. 


As an example, consider an instance of problem~\eqref{eq:SP-static} with $\mathcal{X}=\mathbb R$, $\mathcal{Y}=\mathbb{R}^2$, $\mathcal A=\{a\in\mathbb R^2:\|a\|_2\leq 1\}$ and $\ell(y,a)=\|y-a\|_2^2$. Assume further that $\PP{X}=\delta_1$ and $\PP{Y|X}=\delta_{(0,\frac{1}{2})}$. In this case the constant decision map $m(X)=(0,\frac{1}{2})$ is optimal in~\eqref{eq:SP-static}. We now approximate~\eqref{eq:SP-static} by problem~\eqref{eq:bayesian-interpretation}, which minimizes over parametric decision maps of the form $m_w=p\circ f_w$, and we assume that the prescriptor consists of an optimization layer that maps any feature~$r\in\mathbb R^2$ to
$$
p(r)=\arg\min_{a\in\mathcal A}\|a-r\|_2,
$$
while the feature extractor parametrized by $w\in\mathbb{R}^2$ maps any observation~$x$ to a feature~$f_w(x)=xw=w$ $\mathbb P$-almost surely.
%
%
Training the decision map $m_w$ via Algorithm~\ref{algo:training} with an initial iterate~$w_0\notin \mathcal A$ generates a sequence of iterates that stay outside of~$\mathcal A$ as visualized in Figure~\ref{fig:cvxlayer-problem} (solid blue line).
The corresponding predictions satisfy $m_{w_k}(X)=p(w_k)$ $\mathbb P$-almost surely. Thus, they stay on the boundary of $\mathcal A$ (dashed blue line). If $w_0\in\mathcal A$, on the other hand, then the iterates converge to the global minimum (solid red line). In this case, the corresponding predictions satisfy $m_{w_k}(X)=p(w_k)=w_k$ $\mathbb P$-almost surely. Thus, they coincide with the underlying iterates (dashed red line).

\begin{figure}
    \centering
    \begin{picture}(120,120)
\put(0,0){\includegraphics[trim={0.2cm 0cm 0.5cm 0.5cm}, clip, width=0.5\linewidth]{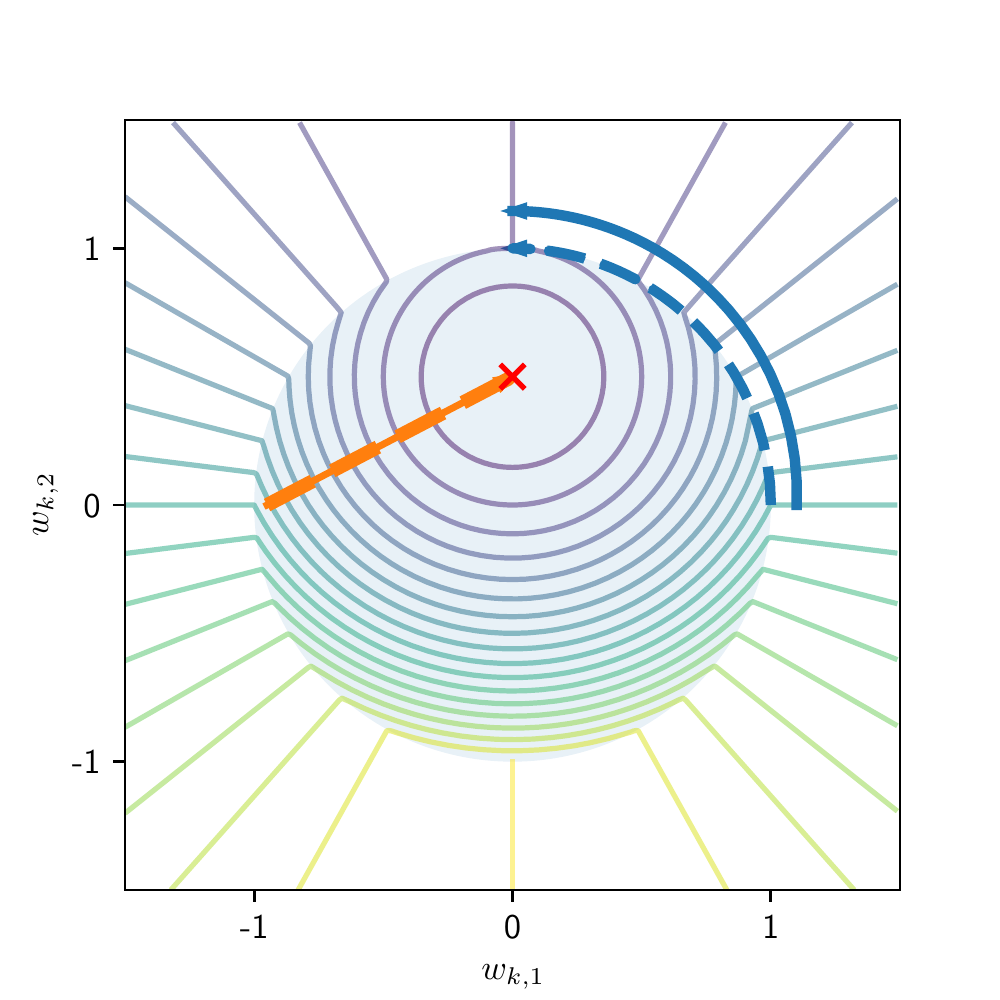}}
\put(15,60){\color{C1}$w_0$}
\put(63,70){\color{C1}$w_K$}
\put(100,55){\color{C0}$w_0$}
\put(63,102){\color{C0}$w_K$}
\put(55,42){$\mathcal{A}$}
\end{picture}
    \caption{Contours of the expected loss $\varphi(w)=\mathbb E[\ell(Y,m_w(X))]$. If the gradient $\nabla\varphi(w)$ exists, then it is perpendicular to the contours of $\varphi(w)$. Thus, the iterates of Algorithm~\ref{algo:training} converge to the global minimum $(0,\frac{1}{2})$ of $\varphi(w)$ if $w_0\in\mathcal A$ (solid red line) or to a local minimum in $\{(0,t):t>1\}$ if $w_0\notin\mathcal A$ (solid blue line).}
    \label{fig:cvxlayer-problem}
    \vspace{-0.2cm}
\end{figure}
\section{Experiments}
We benchmark the discussed algorithms for end-to-end learning against simple baselines as well as the predict-then-optimize approach \cite{mivsic2020data} in the context of a newsvendor problem and an economic dispatch problem.
Implementation details are given in Appendix~\ref{app:experiment-details}, and the code underlying all experiments is provided on GitHub.\footnote{\url{https://github.com/RAO-EPFL/end2end-SO}}

\subsection{Newsvendor Problem}\label{sec:exp-newsvendor}
We first compare the Bayesian model addressed by Algorithm~\ref{algo:training} against the ERM and DRO models. To this end, we consider the decision problem of the seller of a perishable good ( e.g., a newspaper). At the beginning of each day, the newsvendor buys a number $a\in \mathcal{A}=\{1,\dots,d\}$ of items from the supplier at a wholesale price $p>0$. During the day she sells the items at the retail price $q>p$ until the supply~$a$ is exhausted or the random demand $Y\in\mathcal{Y}=\mathcal{A}$ is covered. The salvage value of unsold items is~$0$. Hence, the newsvendor's total cost amounts to $\ell(Y,a) = pa - q\min\{a,Y\}$. We assume that the unobserved confounder~$Z\in\mathbb R^d$ represents the demand distribution, that is, $Z_j = \mathbb{P}_{Y|Z}(Y=j)$ for all $j=1,\dots, d$. If the newsvendor observes $N$ independent historical demands $X=[\widehat Y_1,\ldots, \widehat Y_N]$ sampled from~$\mathbb P_{Y|Z}$, then she aims to solve the following instance of~\eqref{eq:SP}
\begin{equation*}
    \min_{a\in \mathcal{A}}\mathbb{E}_{}[\ell(Y,a)|X] =  \min_{a\in \mathcal{A}} \textstyle \sum_{j=1}^d \ell(j,a) \mathbb{E}_{}[Z_j|X].
\end{equation*}
In the following we fix a common neural network architecture and train decision maps via Algorithms~\ref{algo:training}, \ref{algo:training-erm} and \ref{algo:training-dro} with $K=5\times 10^6$ samples. Specifically, in Algorithm~\ref{algo:training-dro} we set $\mathcal U(X)$ to the Kullback-Leibler ambiguity set of radius~$0.025$ around the empirical distribution on the demand samples contained in~$X$. We designate the strategies output by the three algorithms as ``NN\_BAY", ``NN\_ERM" and ``NN\_DRO", respectively. The strategies obtained by solving the Bayesian problem~\eqref{eq:SP}, the ERM problem~\eqref{eq:erm} and the DRO problem~\eqref{eq:dro} {\em exactly} are designated as ``BAY", ``ERM" and ``DRO", respectively. Finally, the strategy output by an oracle with perfect knowledge of~$Z$ is designated as ``True".

Figure~\ref{fig:cdf_comparison} visualizes the out-of-sample cumulative distribution functions of the profit (negative loss) generated by different data-driven strategies. In line with Theorems~\ref{thm:equivalence:algo:1}, \ref{thm:equivalence:algo:2} and~\ref{thm:equivalence:algo:3}, the neural network-based decisions display a similar performance as the corresponding optimization-based decisions. 
Figure~\ref{fig:cdf_comparison} further shows that a correct choice of the prior is crucial for the Bayesian strategies (BAY and NN\_BAY). Finally, while the expected profits generated by different strategies are similar, the lower tails of the profit distributions vary dramatically. 
For example, Figure~\ref{sfig:NW:correct:prior} indicates that the risk of a loss (negative profit) is almost $10$ times higher under the ERM strategy than under any other strategy. Appendix~\ref{app:experiment-details} provides a more detailed comparison between the neural network-based strategies and the corresponding optimization-based strategies.
\begin{figure}
    \centering
    \begin{subfigure}[b]{0.45\linewidth}
    \centering
    \includegraphics[width=\linewidth]{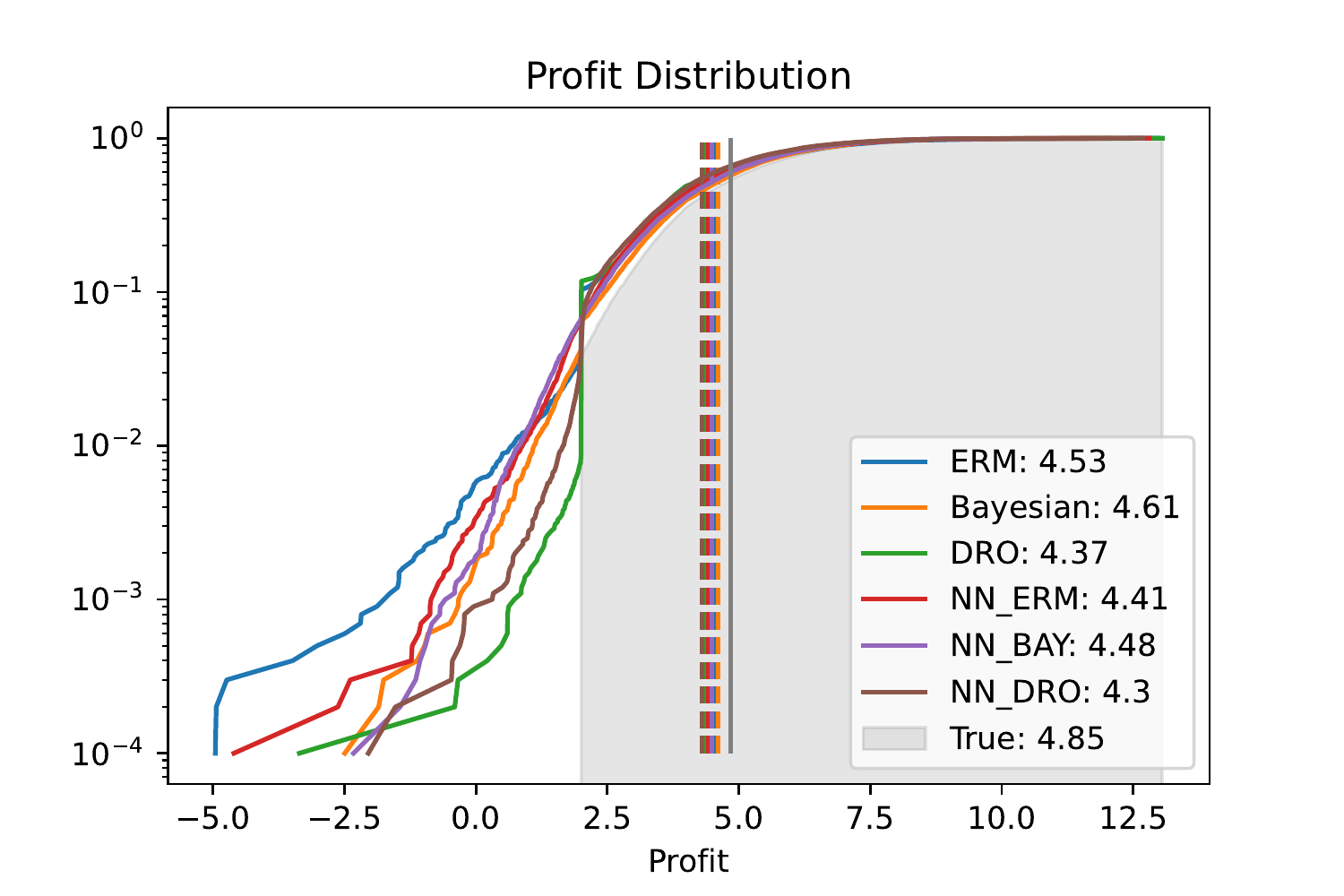}
    \caption{Correct Prior}
    \label{sfig:NW:correct:prior}
    \end{subfigure}
    \hfill
    \begin{subfigure}[b]{0.45\linewidth}
    \centering
    \includegraphics[width=\linewidth]{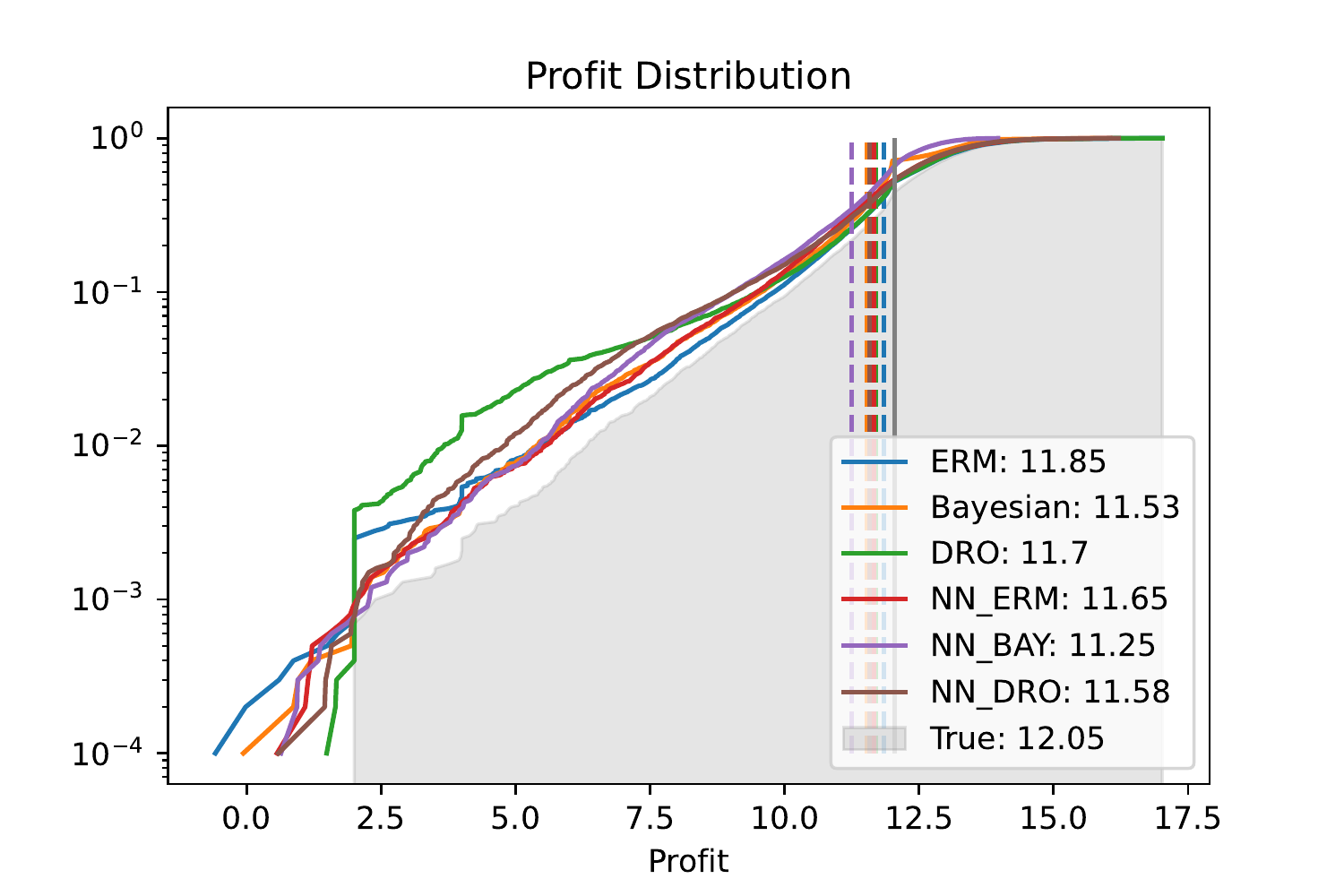}
    \caption{Incorrect Prior}
    \label{sfig:NW:incorrect:prior1}
    \end{subfigure}
    \caption{Cumulative distribution functions (solid lines) and expected values (dashed lines) of the out-of-sample profit generated by different data-driven strategies. 
    }
    \label{fig:cdf_comparison}
    \vspace{-0.2cm}
\end{figure}

\subsection{Economic Dispatch Problem}\label{sec:exp-economicdispatch}
In the second experiment, we revisit the stylized economic dispatch problem described in Section~\ref{sec:endtoend}. Here we assume that there are $J=6$ traditional generators and that the penalty for unmet demand amounts to $p=100$. The capacities $\Bar{a}_j$ of the six generators are given by $1$, $0.5$, $1$, $1$, $1$ and $0.5$, and the respective unit production costs $c_j$ are set to $15$, $20$, $15$, $20$, $30$ and $25$. We use historical wind power production and weather records\footnote{\url{https://www.kaggle.com/datasets/theforcecoder/wind-power-forecasting}} as samples from $\mathbb P_{(X,Y)}$.

\textbf{Observations} Contextual stochastic programs account for side information such as wind speed measurements or weather forecasts that can help to make better decisions. Any such side information is captured by the random variable~$X$, which is observable when the generation decisions $a\in\mathcal A$ must be selected. In the following experiment we distinguish three different possible observations.
The myopic observation \textbf{(Myopic)} consists of current temperature, wind speed and wind direction measurements. 
The incomplete myopic observation \textbf{(Myopic incomp.)} consists only of the current wind direction. The most informative observation \textbf{(Historical)} consists of the current temperature, wind speed and wind direction measurements as well as of the temperature, wind speed, wind direction, and wind energy production measurements at the last two timesteps. 

\textbf{Baselines} We compare the performance of all data-driven strategies to be described below against that of an ideal baseline strategy that has oracle access to the future wind power production level. This oracle strategy thus observes~$X=Y$, in which case the stochastic program~\eqref{eq:SP} collapses to a deterministic optimization problem. The resulting strategy is {\em infeasible} in practice and serves merely as a basis for comparison. A related {\em feasible} baseline strategy is obtained by pretending that $Y$ equals the wind energy production quantity observed in the last period and by solving the corresponding deterministic optimization problem akin to~\eqref{eq:SP}.

\textbf{Maximum Likelihood Estimation (MLE)} We construct a predict-then-optimize strategy by solving a least squares regression model for predicting the wind energy production level $Y$ and then solving the deterministic version of problem~\eqref{eq:SP} corresponding to this point prediction.

\textbf{End-to-End (E2E)} We also design end-to-end learning strategies, which are obtained by using Algorithm~\ref{algo:training} to train the feature extractor together with the prescriptor. We distinguish two different neural network architectures. On the one hand, we can define the prescriptor as an optimization problem layer (OPL) and the feature extractor as a predictor of $Y$. A Softplus activation function in the output layer of the feature extractor ensures that the prediction of~$Y$ is nonnegative. On the other hand, we can define the prescriptor as a constraint-aware layer (CAL), which uses rescaled sigmoid activation functions to force its output into~$\mathcal{A}$.

\begin{table}[t]
\centering
\caption{Average test costs generated by different data-driven strategies for the economic dispatch problem.}
\label{tab:edp-results}
\vspace{0.1cm}
\scalebox{0.75}{
\begin{tabular}{ll|ll}
Approach                          & Observation & 10 minute freq. & 30 minute freq. \\\hline\hline
\multirow{2}{*}{Baseline}         & Oracle            & 60.69              & 60.69             \\
                                  & Lag-1         & 64.96              & 67.71              \\
\hline\multirow{3}{*}{MLE}              & Myopic            & 281.44(1.65)      & 280.10(0.75)       \\
                                  & Myopic Incomp.    & 348.41(0.0)        & 348.43(0.0)         \\
                                  & Historical        & 304.56(11.47)     & 277.74(2.38)      \\
\hline\multirow{3}{*}{E2E-CAL}          & Myopic            & 68.25(3.73)        & 66.57(3.07)       \\
                                  & Myopic Incomp.    & 72.62(0.00)       & 74.11(3.00)       \\
                                  & Historical        & 67.09(4.50)       & 77.06(5.59)        \\
\hline\multirow{3}{*}{E2E-OPL-Softplus} & Myopic            & 71.33(1.87)       & 72.53(0.13)       \\
                                  & Myopic Incomp.    & 72.60(0.01)       & 72.60(0.00)       \\
                                  & Historical        & 72.60(0.0)         & 72.60(0.0)         
\end{tabular}}
\vspace{-0.1cm}
\end{table}
Table~\ref{tab:edp-results} reports the test costs of the different strategies. All strategies are compared under different data frequency models. That is, data is either observed every 10 minutes or every 30 minutes.
We observe that the MLE method incurs the highest costs. This may be attributed to the symmetric training loss, which ignores that it is better to underestimate energy production because unmet demand is heavily penalized. If new data is observed every 10 minutes, then the lag-$1$ baseline performs best, while the end-to-end strategy with access to historical data and with constraint-aware layers performs best among all neural network-based approaches.  If new data is observed every 30 minutes, then the stationarity assumption is violated and the lag-$1$ baseline is no longer competitive. In this case, the most useful observations are the wind speed measurements, which cannot be leveraged by the baselines. End-to-end learning strategies based on optimization layers often fail to train, which we attribute to the gradient projection problem. The myopic observation is most useful, but the incomplete myopic observation does not contain enough information to accurately estimate the wind power production. The historical information is overly rich and therefore leads to overfitting~\cite{ying2019overview}.

\bibliography{references}
\bibliographystyle{icml2023}

\newpage
\appendix
\onecolumn
\section{Proofs of Section~\ref{sec:model-architecture}}
\begin{proof}[Proof of Proposition~\ref{prop:universal_approximation}]
We first prove Assertion~(i). As $p$ is Lipschitz continuous, we have
$$
\|p(r_1)-p(r_2)\|\leq L_p\|r_1-r_2\|\quad \forall r_1, r_2\in \mathcal{R}.
$$
This is notably also true for $r_1=f(x)$ and $r_2=f_w(x)$, in which case we obtain
$$
\|m(x)-m_w(x)\| = \|p(f(x))-p(f_w(x))\|\leq L_p\|f(x)-f_w(x)\|\leq L\varepsilon\quad\forall x\in\mathcal{X},
$$
where the last inequality follows from the assumption about $f_w(x)$. The claim now follows by maximizing the left hand side across all $x\in\mathcal X$. As for Assertion~(ii), the first part of the proof readily implies that
\begin{align*}
\mathbb E\left[ \|m(x)-m_w(x)\|^q \right] \leq &L_p^q \mathbb E\left[\|f(x)-f_w(x)\|^p\right] \leq L_p^q\varepsilon,
\end{align*}
where the last inequality follows from the the assumption about $f_w(x)$.
\end{proof}
\begin{proof}[Proof of Corollary~\ref{cor:universal-approx-loss}]
    Set $\delta=\varepsilon/(L_p L_\ell)$. By~\cite{cybenko1989approximation}, there exists a neural network $f_w$ with a single hidden layer of sufficient width and with sigmoid activation functions such that $\sup_{x\in\mathcal{X}}\|f(x)-f_w(x)\|\leq \delta$. By the Lipschitz-continuity of $p$ and $\ell$, we thus obtain
    \begin{align*}
        \sup_{x\in\mathcal{X},y\in\mathcal{Y}} |\ell(y,m(x))-\ell(y,m_w(x))|\leq 
        \sup_{x\in\mathcal{X}} L_\ell\|m(x)-m_w(x)\|\leq \sup_{x\in\mathcal{X}}L_pL_\ell\|f(x)-f_w(x)\|\leq\varepsilon.
    \end{align*}
    The claim now follows because the expected value of a non-negative random variable is upper bounded by its supremum.
\end{proof}

\section{Proofs of Section~\ref{sec:training-process}}




\begin{proof}[Proof of Theorem~\ref{thm:equivalence:algo:1}]
As for Assertion~(i), note that
the expected value of $g_k$ conditional on the last iterate $w_{k-1}$ satisfies
\begin{align*}
    \EE{}[g_k|w_{k-1}] &= \mathbb E[\nabla_w \ell(Y_k,m_w(X_{k}))|_{w=w_{k-1}}|w_{k-1}] \\
    &= \nabla_w \mathbb E[ \ell(Y_k,m_w(X_{k}))|w_{k-1}] |_{w=w_{k-1}}\\
    & = \nabla_w\EE{}\big[\ell(Y,m_w(X))]|_{w=w_{k-1}}=\nabla\varphi(w_{k-1}),
\end{align*}
where the first equality follows from the definition of $g_k$. The second equality holds because the gradient with respect to $w$ and the expectation conditional on $w_{k-1}$ can be interchanged thanks to the dominated convergence theorem, which applies thanks to Assumption~\ref{ass:smoothness}. The third equality, finally, exploits the independence of $(X_k,Y_k)$ and $w_{k-1}$, and the last equality follows from the definition of~$\varphi$.
Assertion~(ii) follows from~\citep[Theorem~3.5]{wang2021convergence}, which applies again thanks to Assumption~\ref{ass:smoothness}. Indeed, $g_k$ constitutes an unbiased gradient estimator for $\varphi(w)$ at $w=w_{k-1}$ thanks to Assertion~(i). Assumption~\ref{ass:smoothness} further implies that $\varphi(w)$ is $L$-smooth for some $L<\infty$. In addition, the stochastic gradients $g_k$ have bounded variance because they are themselves bounded by Assumption~\ref{ass:smoothness}. Finally, for $\varphi^\star=\min_{w\in\mathbb R^d}\varphi(w)$, we have that $\EE{}[\varphi(w_k)-\varphi^\star]$ is bounded because $\ell$ is bounded. Thus, the claim follows.
\end{proof}
The next lemma establishes the equivalence of problems~\eqref{eq:erm} and~\eqref{eq:ERM-static}.

\begin{lemma}[Equivalence of~\eqref{eq:erm} and~\eqref{eq:ERM-static}]\label{lemma:erm-equivalence}
Problems~\eqref{eq:erm} and~\eqref{eq:ERM-static} are equivalent in the sense that if $m^\star$ solves~\eqref{eq:ERM-static}, then $m^\star(X)$ solves~\eqref{eq:erm} $\mathbb P$-almost surely and vice versa.\end{lemma}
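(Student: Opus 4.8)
The plan is to prove the equivalence of the empirical risk minimization problem~\eqref{eq:erm} with the static reformulation~\eqref{eq:ERM-static} by invoking the interchangeability principle, exactly as foreshadowed in the main text. The two problems differ only in that~\eqref{eq:erm} fixes a particular realization of~$X=[\widehat Y_1,\ldots,\widehat Y_N]$ and optimizes a single action $a\in\mathcal A$, whereas~\eqref{eq:ERM-static} optimizes over the whole space~$\mathcal M$ of measurable maps $m:\mathcal X\to\mathcal A$ and averages over~$X$ as well. The key observation is that the inner objective of~\eqref{eq:ERM-static}, namely $\frac1N\sum_{n=1}^N\ell(\widehat Y_n,a)$, depends on the randomness in~$\widehat Y_1,\ldots,\widehat Y_N$ \emph{only through~$X$}, since these samples are precisely the components of~$X$. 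Hence the integrand is a deterministic function of $X$ and $a$, which is exactly the structure required by the interchangeability principle.

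First I would write the objective of~\eqref{eq:ERM-static} as $\EE{}[g(X,m(X))]$, where I define the function $g(x,a)=\frac1N\sum_{n=1}^N\ell(\widehat y_n,a)$ with $\widehat y_1,\ldots,\widehat y_N$ denoting the components of~$x$. The crucial point to verify is that for each fixed~$x$ the map $a\mapsto g(x,a)$ is a well-defined normal integrand (or at least that $g$ is a measurable function jointly in $(x,a)$ and inf-compact in~$a$, or that $\mathcal A$ is such that measurable selections exist), so that the hypotheses of Rockafellar-Wets~\citep[Theorem~14.60]{rockafellar2009variational} are met. Granting this, the interchangeability principle yields
\begin{equation*}
\min_{m\in\mathcal M}\EE{}\big[g(X,m(X))\big]=\EE{}\Big[\min_{a\in\mathcal A}g(X,a)\Big],
\end{equation*}
and, more importantly, its selection part states that $m^\star$ attains the left-hand minimum if and only if $m^\star(X)\in\arg\min_{a\in\mathcal A}g(X,a)$ holds $\PP{}$-almost surely. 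Since $\arg\min_{a\in\mathcal A}g(X,a)$ is exactly the solution set of~\eqref{eq:erm} for the realized~$X$, this is precisely the claimed equivalence in both directions.

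The main obstacle I anticipate is not the algebra but the measurability and regularity bookkeeping needed to apply Theorem~14.60 cleanly: one must confirm that $g$ is a normal integrand, that $\mathcal M$ is genuinely the full space of measurable maps (the decomposability assumption of the interchangeability principle), and that the almost-sure qualifier is handled correctly when passing between the two directions. I would also note, for rigor, that boundedness of~$\ell$ (assumed in Section~\ref{sec:endtoend}) guarantees integrability so that all expectations are finite and the interchange is legitimate. Once these hypotheses are checked, the proof is essentially a one-line application of the cited theorem together with its measurable-selection conclusion, mirroring the argument already used to relate~\eqref{eq:SP} and~\eqref{eq:SP-static}. The ``vice versa'' direction requires no separate work, as the biconditional in the selection statement of the interchangeability principle covers both implications simultaneously.
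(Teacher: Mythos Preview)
Your proposal is correct and follows essentially the same approach as the paper: both proofs hinge on the observation that the samples $\widehat Y_n$ are the components of~$X$ (hence the integrand is a deterministic function of $(X,a)$) and then invoke the interchangeability principle~\citep[Theorem~14.60]{rockafellar2009variational} together with its selection statement to obtain the biconditional. The only cosmetic difference is that the paper first passes through the law of iterated conditional expectations before applying interchangeability, a detour you rightly bypass since the inner conditional expectation is trivial here.
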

\begin{proof}[Proof of Lemma~\ref{lemma:erm-equivalence}]
By the law of iterated conditional expectations \citep[Theorem 5.1.6.]{durrett_book}, \eqref{eq:ERM-static} can be recast as
\begin{equation}
    \label{eq:interchangeability1}
    \min_{m\in\mathcal{M}}\EE{}\left[\EE{}\left[ \left. \frac{1}{N}\sum_{n=1}^N\ell(\widehat{Y}_n,  m(X))\,\right|X\right]\right].
\end{equation}
Next, by the interchangeability principle~\citep[Theorem~14.60]{rockafellar2009variational}, minimizing over all measurable decision maps~$m\in \mathcal M$ outside of the outer expectation is equivalent to minimizing over all decisions $a\in\mathcal A$ inside the outer expectation. Hence, the above optimization problem is equivalent to
\begin{equation}
    \label{eq:interchangeability2}
    \EE{}\left[ \min_{a\in\mathcal{A}} \EE{}\left[ \left. \frac{1}{N}\sum_{n=1}^N\ell(\widehat{Y}_n,  a)\,\right|X\right]\right].
\end{equation}
In addition, the interchangeability principle also implies that $m^\star$ solves the minimization problem over $m\in\mathcal M$ in~\eqref{eq:interchangeability1} if and only if $a^\star=m^\star(X)$ solves the minimization problem over $a\in\mathcal A$ in~\eqref{eq:interchangeability2} $\mathbb P$-almost surely. As $X = [\widehat Y_1,\ldots, \widehat Y_N]$, the samples $\widehat Y_n$ are measurable with respect to the $\sigma$-algebra generated by $X$ for all $i=1,\ldots, N$. Therefore, the inner minimization problem in~\eqref{eq:interchangeability2} is in fact equivalent to the ERM problem~\eqref{eq:erm}.
Thus, the claim follows.
\end{proof}

\begin{proof}[Proof of Theorem~\ref{thm:equivalence:algo:2}]
The proof widely parallels that of Theorem~\ref{thm:equivalence:algo:1}. Details are omitted for brevity.
\end{proof}

Similar to Lemma~\ref{lemma:erm-equivalence}, we provide a Lemma to outline the equivalence of problems~\eqref{eq:dro} and~\eqref{eq:DRO-static}.

\begin{lemma}[Equivalence of~\eqref{eq:dro} and~\eqref{eq:DRO-static}]\label{lemma:dro-equivalence}
Problems~\eqref{eq:dro} and~\eqref{eq:DRO-static} are equivalent in the sense that if $m^\star$ solves~\eqref{eq:DRO-static}, then $m^\star(X)$ solves~\eqref{eq:dro} $\mathbb P$-almost surely and vice versa.\end{lemma}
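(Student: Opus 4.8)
The plan is to mirror the proof of Lemma~\ref{lemma:erm-equivalence} almost verbatim, the only novelty being the inner worst-case structure. The key idea is to absorb the maximization over $\mathbb Q$ into a single worst-case loss function and then invoke the interchangeability principle exactly as in the ERM case.

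First I would define the worst-case loss
\[
g(x,a)=\max_{\mathbb Q\in\mathcal{U}(x)}\int_{\mathcal Y}\ell(y,a)\,{\rm d}\mathbb Q(y),
\]
so that problem~\eqref{eq:DRO-static} reads $\min_{m\in\mathcal M}\EE{}[g(X,m(X))]$ while the inner minimization in problem~\eqref{eq:dro} reads $\min_{a\in\mathcal A}g(X,a)$. Using the law of iterated conditional expectations~\citep[Theorem 5.1.6.]{durrett_book}, I would rewrite~\eqref{eq:DRO-static} as
\[
\min_{m\in\mathcal M}\EE{}\big[\EE{}[g(X,m(X))\mid X]\big],
\]
where the conditional expectation is trivial because $g(X,m(X))$ is $\sigma(X)$-measurable. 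The interchangeability principle~\citep[Theorem~14.60]{rockafellar2009variational} then moves the minimization inside the expectation, yielding $\EE{}[\min_{a\in\mathcal A}g(X,a)]$, and simultaneously certifies that $m^\star$ solves the outer problem if and only if $m^\star(X)\in\arg\min_{a\in\mathcal A}g(X,a)$ holds $\mathbb P$-almost surely. Since $\mathcal U(X)$ depends on $X$ only, the inner problem $\min_{a\in\mathcal A}g(X,a)$ is precisely~\eqref{eq:dro}, which establishes the claim in both directions.

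The main obstacle is verifying the hypotheses that make the interchangeability principle applicable, namely that $g$ is a \emph{normal integrand}: jointly measurable in $x$ and lower semicontinuous in $a$. Measurability in $x$ requires the ambiguity-set correspondence $x\mapsto\mathcal U(x)$ to be measurable, which holds for the standard constructions in which $\mathcal U(X)$ is a ball around the empirical distribution on the samples encoded in $X$. Lower semicontinuity in $a$ follows because $g(x,\cdot)$ is a supremum of the maps $a\mapsto\int_{\mathcal Y}\ell(y,a)\,{\rm d}\mathbb Q(y)$, each of which is continuous in $a$ by dominated convergence under the smoothness and boundedness of $\ell$, and a supremum of continuous functions is lower semicontinuous. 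The weak compactness of $\mathcal U(X)$ assumed in Theorem~\ref{thm:equivalence:algo:3} guarantees that the maximum defining $g$ is attained, so that $g$ is finite-valued and the supremum is in fact a maximum. Once $g$ is confirmed to be a normal integrand, the remaining steps are identical to those in the proof of Lemma~\ref{lemma:erm-equivalence}.
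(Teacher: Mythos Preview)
Your proof is correct and follows essentially the same approach as the paper's: condition on $X$, invoke the interchangeability principle~\citep[Theorem~14.60]{rockafellar2009variational} to push the minimization inside the expectation, and identify the inner problem with~\eqref{eq:dro}. Your explicit introduction of the worst-case loss $g(x,a)$ and the discussion of the normal-integrand hypotheses make the argument slightly more self-contained than the paper's terse version, but the core logic is identical.
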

\begin{proof}[Proof of Lemma~\ref{lemma:dro-equivalence}]
By the law of iterated conditional expectations \citep[Theorem 5.1.6.]{durrett_book}, \eqref{eq:DRO-static} can be recast as
\begin{equation}
    \label{eq:interchangeability3}
    \min_{m\in\mathcal{M}}\EE{}\left[\EE{}\left[ \left.\max_{\mathbb Q\in\mathcal{U}(X)}  \int_{\mathcal Y}\ell(y,m(X)) \, {\rm d}\mathbb Q(y)\,\right|X \right]\right],
\end{equation}
and by the interchangeability principle~\citep[Theorem~14.60]{rockafellar2009variational}, this is equivalent to
\begin{equation}
    \label{eq:interchangeability4}
    \EE{}\left[\min_{a\in\mathcal{A}}\EE{}\left[ \left.\max_{\mathbb Q\in\mathcal{U}(X)}  \int_{\mathcal Y}\ell(y,a) \, {\rm d}\mathbb Q(y)\,\right|X \right]\right] =\EE{}\left[\min_{a\in\mathcal{A}} \max_{\mathbb Q\in\mathcal{U}(X)}  \int_{\mathcal Y}\ell(y,a) \, {\rm d}\mathbb Q(y)\right].
\end{equation}
The last equality holds because $X = [\widehat Y_1,\ldots, \widehat Y_N]$. The interchangeability principle also implies that $m^\star$ solves the minimization problem over $m\in\mathcal M$ in~\eqref{eq:interchangeability3} if and only if $a^\star=m^\star(X)$ solves the minimization problem over $a\in\mathcal A$ in~\eqref{eq:interchangeability4} $\mathbb P$-almost surely. This observation completes the proof.
\end{proof}

\begin{proof}[Proof of Theorem~\ref{thm:equivalence:algo:3}]
By Assumption~\ref{ass:smoothness}, the gradient $\nabla_w \int_{\mathcal Y}\ell(y,m_w(X_{k})){\rm d}\mathbb{Q}(y)$ exists and is continuous in $w$ for every $\mathbb{Q}\in \mathcal U(X)$ and for every $k=1,\ldots, K$. Since the maximization problem over~$\mathbb Q$ in~\eqref{eq:bayesian-interpretation-dro} has $\mathbb P$-almost surely a unique solution, Danskin's Theorem~\citep[Theorem~7.21]{shapiro2021lectures} implies that 
$$
    g_k= \nabla_w \left[\left.\int_{\mathcal Y}\ell(y, m_{w}(X_{k}) \,{\rm d}\mathbb{Q}_k^\star(y) \right]\right|_{w=w_{k-1}}
    = \nabla_w \left[\left.\max_{\mathbb{Q}\in\mathcal{U}(X_k)}\int_{\mathcal Y}\ell(y, m_{w}(X_{k}) \,{\rm d}\mathbb{Q}(y)\right]\right|_{w=w_{k-1}}.
$$
As $g_k$ is bounded, it has a bounded variance, and thus we can conclude that
$$
\EE{}[g_k|w_{k-1}]=\nabla_w \EE{}\left.\left[\max_{\mathbb{Q}\in\mathcal{U}(X_k)}\int\ell(y, m_{w}(X_{k})\, {\rm d}\mathbb{Q}(y)\right] \right|_{w=w_{k-1}}.
$$
This observation completes the proof.
\end{proof}

\section{Details on Experiments}\label{app:experiment-details}
\subsection{Details on the Minimum Mean-Square Estimation Experiment in Section~\ref{ssec:mean:estimation}}

The observation is given by $X=[\widehat Y_1,\ldots,\widehat Y_{20}]$, whose components $\widehat Y_n$ are sampled independently from $\PP{Y|Z}$. Thus, $Y$ and~$X$ are independent conditionally on~$Z$ as in Figure~\ref{fig:bayesian:standard}. Note, however, that both $Y$ and $X$ depend on $Z$, and thus they are not (unconditionally) independent. We solve problem~\eqref{eq:mean-estimation} with a Batch-SGD variant of Algorithm~\ref{algo:training}, where the training samples $\{(X_{k},Y_{k})\}_{k=1}^K$ are generated by first sampling $Z$ from the prior $\mathcal{N}(2,0.25)$. We then sample $X_k$ from $\PP{X|Z}$ and $Y_k$ from $\PP{Y|Z}$. 
The Batch-SGD algorithm runs over $50{,}000$ iterations with $100$ samples per batch to reduce the variance of the gradient updates. Therefore, a total of $K=5\times 10^6$ training samples are used for training. The neural network-based predictions $\widehat\mu_{\rm NN}$ are compared against the sample mean $\widehat\mu_{\rm ERM}$ and posterior mean $\widehat\mu_{\rm MMSE}$.
The posterior mean can be computed in closed form.
Indeed, since we use a conjugate prior for the mean of a Gaussian distribution, the solution of problem~\eqref{eq:mean-estimation} can be computed analytically. 
This is possible because $\PP{Y|X}=\mathcal{N}(\mu_\mathsf{Bayes},\sigma_\mathsf{Bayes}^2)$ with
\begin{align*}
\mu_{\mathsf{Bayes}} &= \frac{1}{\frac{1}{0.25}+\frac{20}{4}}\left(\frac{2}{0.25}+\frac{\sum_{i=1}^{20}\widehat Y_i}{4}\right)\quad \text{and} \quad 
\sigma_{\mathsf{Bayes}}^2 = 4 + \left(\frac{1}{0.25}+\frac{20}{4}\right)^{-1}.
\end{align*}
More specifically, $Y$ can be expressed as $\mu_{\mathsf{Bayes}} + A+B$, where $A$ and $B$ are both zero-mean Gaussian random variables with $A$ having variance $4$ (because $\PP{Y|Z}=\mathcal{N}(Z,4)$) and $B$ having variance $(1/{0.25}+{20}/{4})^{-1}$, see \cite{murphy2007conjugate} for a detailed derivation of the posterior mean with conjugate prior.
It follows that $\widehat \mu_{\rm MMSE}=\mu_{\mathsf{Bayes}}$. By Theorem~\ref{thm:equivalence:algo:1}, we expect the output $\widehat \mu_{\rm NN}$ of the trained neural network to approximate the minimizer $\widehat \mu_{\rm MMSE}$ of the expected posterior loss, which is biased towards 2 due to the chosen prior. On the other hand, $\widehat \mu_{\rm ERM}$ is an unbiased estimator of the mean, and we thus expect it to behave differently than the other two estimators. 
This simple experiment empirically verifies Theorem~\ref{thm:equivalence:algo:1}.

\subsection{Details on the Newsvendor Experiment in Section~\ref{sec:exp-newsvendor}}
We set the number of possible demand levels to $d=11$, the wholesale price to $p=5$ and and the retail price to $q=7$. We further assume that an observation $X$ consists of $N=20$ historical demand samples (i.e.,~$X=[\widehat Y_1,\ldots,\widehat Y_{20}]$). Finally, we define the ambiguity set $\mathcal U(X)$ in the DRO problem~\eqref{eq:dro} as the family of all demand distributions whose Kullback-Leibler divergence with respect to the empirical distribution on the samples in~$X$ does not exceed~$0.25$. In this case problem~\eqref{eq:dro} can be solved efficiently via the convex optimization techniques developed in \cite{ben2013robust}. Instead of using Algorithms~\ref{algo:training},~\ref{algo:training-erm} and \ref{algo:training-dro} directly, we train the neural networks via the Adam optimizer~\cite{kingma2015adam}.  Training proceeds over $1{,}000$ iterations using batches of $1{,}000$ samples of $Z\sim\mathbb P_{Z}$ and $5$ samples of $(X,Y)\sim \mathbb P_{(X,Y)|Z}$ per iteration. This corresponds to $K=5\times 10^6$ samples of $(X,Y,Z)$ in total as described in Section~\ref{sec:exp-newsvendor}. When generating training samples, we assume that $\mathbb P_Z$ represents the uniform distribution on the 11-dimensional probability simplex. 
Note that this uniform distribution coincides with the Dirichlet distribution of order $d=11$ whose $11$ parameters are all equal to~$1$. As the prior $\mathbb P_Z$ is a Dirichlet distribution, the posterior $\mathbb P_{Z|X}$ is also a Dirichlet distribution with new parameters updated by the observation~$X$ \cite{berger2013bayesian_opt}. Thus, the posterior Bayes action map (the ``True'' strategy) can be computed in closed form. The out-of-sample profit $-\mathbb{E}[\ell(Y,m(X))]$ of any decision strategy $m(X)$ is evaluated empirically on $10{,}000$ test samples of $(X,Y,Z)$. To assess the advantages and disadvantages of the different decision strategies, we consider several test distributions. These test distributions are constructed exactly like the training distribution but use different Dirichlet parameters for~$\mathbb P_Z$. We expect that the performance of the ERM and DRO strategies is immune to misspecifications of the prior $\mathbb P_Z$. The Bayesian strategy and its neural network approximation, however, are expected to suffer under a biased prior. The test performance shown in Figure~\ref{sfig:NW:correct:prior} is evaluated under the correct prior (i.e., all parameters of the Dirichlet distribution~$\mathbb{P}_Z$ are equal to~1).  Figures~\ref{sfig:NW:incorrect:prior1} and~\ref{sfig:NW:incorrect:prior2-appendix} show the impact of a distribution shift. Specifically, the Dirichlet parameters of the test distribution underlying  Figure~\ref{sfig:NW:incorrect:prior1} are set to $0.1$ for the five lowest demand levels and to $2$ for the 6 highest demand levels. Thus, the prior used for training gives too much weight to low demand levels. 
Similarly, the Dirichlet parameters of the test distribution underlying Figure~\ref{sfig:NW:incorrect:prior2-appendix} are set to $2$ for the 6 lowest demand levels and to $0.1$ for the 5 highest demand levels. Thus, the prior used for training gives too much weight to high demand levels.
\begin{figure}
    \centering
    \includegraphics[width=0.5\linewidth]{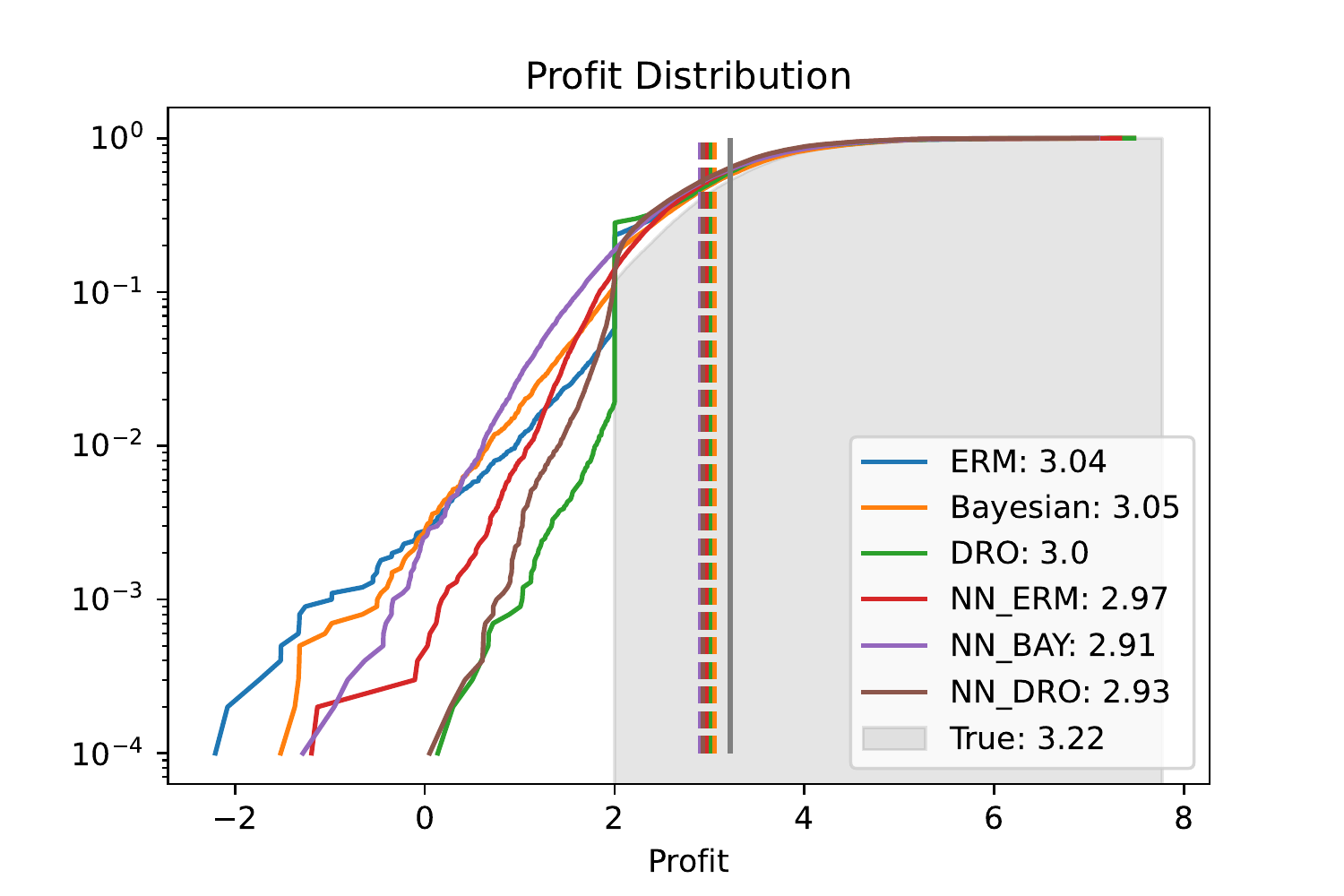}
\caption{Cumulative distribution functions (solid lines) and expected values (dashed lines) of the out-of-sample profit generated by different data-driven strategies for the newsvendor problem (additional example with incorrect prior).
    }
    \label{sfig:NW:incorrect:prior2-appendix}
    \label{fig:cdf_comparison-appendix}
\end{figure}

Figure~\ref{fig:news-decision} compares the decisions obtained with the different methods. In this experiment, the training and testing distributions match, and we set the Dirichlet parameters of the prior $\mathbb{P}_Z$ to $0.5$ for the 7 lowest demand levels and to $2$ for the 4 highest demand levels. 
This particular prior is chosen because it clearly exposes the differences between different strategies. We generate $10^5$ random observations $X\sim\mathbb P_X$ and compute the corresponding decisions. Each chart in Figure~\ref{fig:news-decision} compares one of the exactly computed decisions (vertical axis) against a neural network-based approximation (horizontal axis) trained with Algorithms~\ref{algo:training}, \ref{algo:training-erm} and~\ref{algo:training-dro}. The resulting point clouds visualized  in Figure~\ref{fig:news-decision} are consistent with our theoretical results. That is, the neural network-based approximations align best with the exactly computed decisions in the three charts on the diagonal. Additionally, we observe that the Bayesian strategies order more than the ERM strategies because demand distributions with a large expected value are more likely under the chosen prior. In contrast, the DRO strategies order less than the ERM strategies due to the embedded ambiguity aversion, which favors conservative decisions.

\begin{figure}
    \centering
    \includegraphics[width=\linewidth]{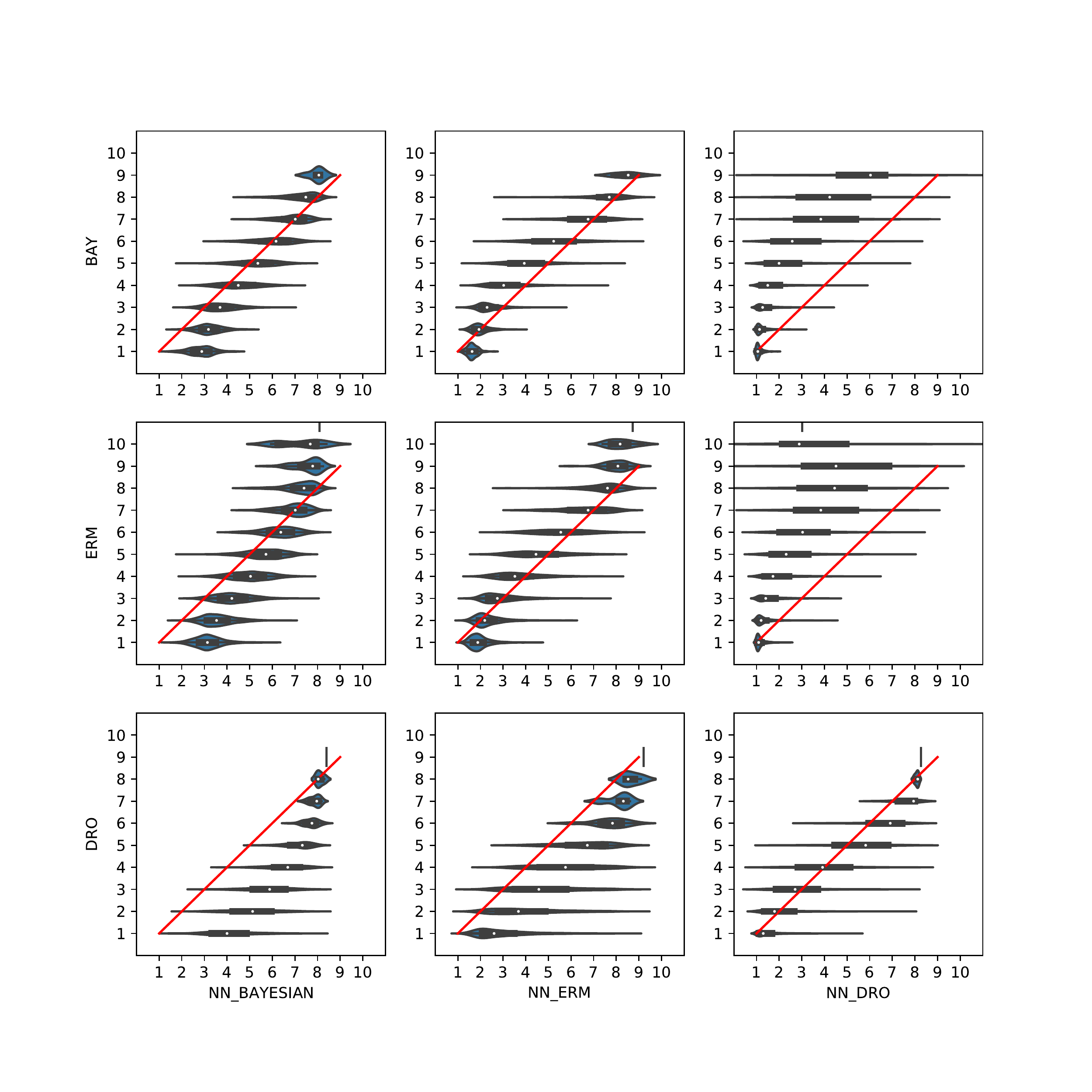}
    \caption{Comparison of the Bayesian, ERM, and DRO strategies against the corresponding neural network-based approximations.}
    \label{fig:news-decision}
\end{figure}

\subsection{Details on the Economic Dispatch Experiment in Section~\ref{sec:exp-economicdispatch}}

We assume that the constant energy demand $d=4$ must be covered by the uncertain output~$Y$ of the wind turbine and by the outputs~$a_j$, $j=1,\ldots,6$, of the six controllable generators. The capacity of the wind turbine equals 2. Thus, at least 2 units of energy must be produced by conventional generators. The capacities and generation costs of these generators are listed in Table~\ref{tab:generators}. The wind turbine produces energy for free but cannot be controlled. A dataset of historical wind power production and weather records with a 10~minute resolution is available from Kaggle.\footnote{\url{https://www.kaggle.com/datasets/theforcecoder/wind-power-forecasting}} 
The dataset covers the period from 1~January~2018 to 30~March~2020. After removing corrupted samples, the period from 1~January~2018 to 31~December~2019 comprises $59{,}532$ records, which we use as the training set. The remaining records are used for testing. We traverse the test set in steps of 10, 30 and 60 minutes to simulate different sampling frequencies. 
For each interval between two consecutive time steps we solve the economic dispatch problem described in Section~\ref{sec:endtoend}.

\begin{table}[]\centering
\caption{Generation costs and capacities of the generators}
\label{tab:generators}\vspace{0.1cm}
\begin{tabular}{l|cccccc}
Generator $j$        & 1       & 2       & 3       & 4       & 5       & 6       \\
\hline
Generation Cost $c_j$& 15 & 20 & 15 & 20 & 30 & 25 \\
Capacity $\Bar{a}_j$& 1     & 0.5   & 1     & 1     & 1     & 0.5  
\end{tabular}
\end{table}

\paragraph{Maximum Likelihood Estimation (MLE)} The MLE approach first uses least squares regression on the training data to construct a prediction~$\widehat Y$ of the wind energy production~$Y$. This prediction is then used as an input for the deterministic prescription problem $\min_{a\in\mathcal A}\ell(\widehat Y,a)$, which outputs the MLE decision. If $Y$ can be expressed as a linear function of the observation~$X$ with an additive Gaussian error, then least squares regression is indeed equivalent to MLE. While MLE outputs an unbiased prediction~$\widehat Y$, the task loss caused by a prediction error $Y-\widehat Y$ is misaligned with the regression loss. Indeed, if $\widehat Y$ overestimates $Y$, then the MLE decision produces too little energy, which incurs high costs of $p=100$ per unit of unmet demand. Conversely, if $\widehat Y$ underestimates $Y$, then the MLE decision produces too much energy. However, this incurs a cost of at most 30 per unit of surplus, that is, the unit production cost of generator~5.

\paragraph{End-to-End (E2E)} 
We compare multiple neural network architectures.

\textbf{(OPL):} The OPL architecture consists of a feature extractor that maps the observation~$X$ to a prediction~$\widehat Y$ of~$Y$ and a prescriptor that maps~$\widehat Y$ to a decision. The feature extractor involves one hidden layer with 64 neurons and ReLU activation functions and an output layer with 1 neuron and a Softplus activation function. The prescriptor subsequently solves the deterministic economic dispatch problem $\min_{a\in\mathcal A}\ell(\widehat Y,a)$, which outputs the E2E decision. 

\textbf{(CAL):} The CAL architecture consists of a feature extractor that maps the observation~$X$ to a 6-dimensional feature~$R$ and a prescriptor that maps $R$ into the feasible set $\mathcal A$. The feature extractor involves one hidden layer with 64 neurons and ReLU activation functions and an output layer with 6 neurons and Sigmoid activation functions, which determine the output of each generator as a percentage of its capacity. A simple rescaling with the generator capacities then yields a decision in~$\mathcal A$. 

Table~\ref{tab:edp-results-appendix} reports the out-of-sample costs of all data-driven decision strategies corresponding to different observations and sampling frequencies. It repeats the results of Table~\ref{tab:edp-results} but also shows the out-of-sample costs that can be earned by observing~$X$ only every 60 minutes. These costs are uncertain for two reasons: (1) the neural network weights are randomly initialized, and (2) the training dataset is shuffled before training. We report the mean as well as the standard deviation of the average cost on the test data over 5 replications of the experiment.

\begin{table*}[t]
\centering
\caption{Mean and standard deviation (in parentheses) of the average test costs generated by different data-driven strategies for the economic dispatch problem.}
\label{tab:edp-results-appendix}
\vspace{0.1cm}
\scalebox{1}{
\begin{tabular}{ll|lll}
Approach                          & Observation & 10 minute frequency & 30 minute frequency & 60 minute frequency \\\hline\hline
\multirow{2}{*}{Baseline}         & Oracle            & 60.688              & 60.691              & 60.703              \\
                                  & Lag-1         & 64.959              & 67.705              & 70.714              \\
\hline\multirow{3}{*}{MLE}              & Myopic            & 281.436(1.652)      & 280.103(0.75)       & 278.686(0.504)      \\
                                  & Myopic Incomp.    & 348.414(0.0)        & 348.425(0.0)        & 348.488(0.0)        \\
                                  & Historical        & 304.558(11.472)     & 277.736(2.378)      & 277.541(5.637)      \\
\hline\multirow{3}{*}{E2E-CAL}          & Myopic            & 68.251(3.73)        & 66.566(3.068)       & 69.565(3.727)       \\
                                  & Myopic Incomp.    & 72.616(0.003)       & 74.107(2.997)       & 74.611(3.999)       \\
                                  & Historical        & 67.088(4.502)       & 77.06(5.585)        & 79.827(3.879)       \\
\hline\multirow{3}{*}{E2E-OPL-Relu}     & Myopic            & 72.601(0.0)         & 72.601(0.0)         & 72.603(0.0)         \\
                                  & Myopic Incomp.    & 72.601(0.0)         & 72.601(0.0)         & 72.603(0.0)         \\
                                  & Historical        & 72.601(0.0)         & 72.601(0.0)         & 72.603(0.0)         \\
\hline\multirow{3}{*}{E2E-OPL-Softplus} & Myopic            & 71.326(1.872)       & 72.527(0.129)       & 69.312(2.673)       \\
                                  & Myopic Incomp.    & 72.604(0.006)       & 72.602(0.002)       & 72.606(0.005)       \\
                                  & Historical        & 72.601(0.0)         & 72.601(0.0)         & 72.603(0.0)        
\end{tabular}}
\end{table*}
\end{document}